\font\sc=rsfs10 at 12pt
\numberwithin{equation}{section}
\renewcommand{\a}{\alpha}
\renewcommand{\b}{\beta}
\newcommand{\D}{\Delta}
\newcommand{\ve}{\varepsilon}
\renewcommand{\k}{\kappa}
\newcommand{\m}{\mu}
\newcommand{\n}{\nu}
\renewcommand{\r}{\rho}
\newcommand{\s}{\sigma}
\newcommand{\Si}{\Sigma}
\renewcommand{\t}{\tau}
\newcommand{\f}{\phi}
\newcommand{\vf}{\varphi}
\renewcommand{\P}{\Pi}
\renewcommand{\o}{\omega}
\renewcommand{\O}{\Omega}
\newcommand{\C}{{\mathbb C}}
\newcommand{\R}{{\mathbb R}}
\newcommand{\DD}{{\mathbb{D}}}
\newcommand{\pp}{\pmb{\partial}}
\newcommand{\fb}{{\mathbf f}}
\newcommand{\qb}{{\mathbf q}}
\newcommand{\tb}{{\mathbf t}}
\newcommand{\Ab}{{\mathbf A}}
\newcommand{\Bb}{{\mathbf B}}
\newcommand{\Db}{{\mathbf D}}
\newcommand{\Eb}{{\mathbf E}}
\newcommand{\Hb}{{\mathbf H}}
\newcommand{\Kb}{{\mathbf K}}
\newcommand{\Mb}{{\mathbf M}}
\newcommand{\Pb}{{\mathbf P}}
\newcommand{\Qb}{{\mathbf Q}}
\newcommand{\Sbb}{{\mathbf S}}
\newcommand{\Tb}{{\mathbf T}}
\newcommand{\Vb}{{\mathbf V}}
\newcommand{\Zb}{{\mathbf Z}}
\newcommand{\Ac}{{\mathcal A}}
\newcommand{\Dc}{{\mathcal D}}
\newcommand{\Fc}{{\mathcal F}}
\newcommand{\Hc}{{\mathcal H}}
\newcommand{\Kc}{{\mathcal K}}
\newcommand{\Lc}{{\mathcal L}}
\newcommand{\Sc}{{\mathcal S}}
\newcommand{\Es}{\sc\mbox{E}\hspace{1.0pt}}
\newcommand{\Ls}{\sc\mbox{L}\hspace{1.0pt}}
\DeclareMathOperator{\im}{{\rm Im}\,}
\newtheorem{theorem}{Theorem}[section]
\newtheorem{proposition}[theorem]{Proposition}
\newtheorem*{theorem*}{Theorem}
\theoremstyle{definition}
\newtheorem{definition}[theorem]{Definition}
\theoremstyle{remark}
\newtheorem{remark}[theorem]{Remark}
\date{}
\begin{document}

\title[Poly-Bergman Toeplitz operators]{Toeplitz operators in polyanalytic Bergman type spaces}

\author{Grigori Rozenblum }

\address{ Chalmers University of Technology and The University of Gothenburg (Sweden, Gothenburg); St.Petersburg State University Dept.Math. Physics (St.Petersburg, Russia)}

\email{grigori@chalmers.se}
\author {Nikolai Vasilevski}
\address{Cinvestav (Mexico, Mexico-city)}
\email{nvasilev@cinvestav.mx}
\subjclass[2010]{Primary 30H20, Secondary 	47B35}
\keywords{Polyanalitiv functions, Bergman spaces, Toeplitz operators}
\dedicatory{In memory of Selim Grigorievich Krein, a great mathematician and a charming person}
\begin{abstract}

We consider Toeplitz operators in Bergman and Fock type spaces of polyanalytic $L^2\textup{-}$functions on the disk or on the half-plane with respect to the Lebesgue measure (resp., on $\mathbb{C}$ with  the plane Gaussian measure).  The structure involving creation and annihilation operators, similar to the classical one present for the Landau Hamiltonian, enables us to reduce Toeplitz operators in true polyanalytic spaces to the ones in the usual Bergman type spaces, however with distributional symbols. This reduction leads to describing a number of properties of the operators in the title, which may differ  from the properties of the usual Bergman-Toeplitz operators.

\emph{Keywords:} Polyanalytic functions, Bergman spaces, Fock spaces, Toeplitz operators, Creation and annihilation.
\end{abstract}
\maketitle
\section{Introduction} The paper is devoted to the study of Toeplitz operators in a relatively unexplored class of Bergman type spaces. While there is a vast literature devoted to operators in the classical Bergman and Fock spaces of analytic functions, considerably less is known about the case of spaces of polyanalytic functions. As it concerns general properties of these spaces, one can mention the books \cite{Balk,VBook} and the review papers \cite{Balk2,Feucht}. More specific information concerning  properties of these spaces can be found in \cite{Hedenmalm, Karlovich, Pavlovic, Pessoa15, Pessoa17, V1,  V2, Ramazanov1999, Ramazanov2002}, and some more, see the bibliography below. However, only one  paper \cite{CucLe} was devoted to the study of Toeplitz operators in nonanalytic spaces. Meanwhile, the application fields of polyanalytic spaces keep expanding, including classical and quantum physics, mechanics, signal processing, wavelets and more.

As it has been  rather recently discovered, the spaces of polyanalytic functions possess an interesting structure, reminding the creation-annihilation operators in certain classical quantum-physics problems. Say, for the Landau Hamiltonian, i.e.,  the operator describing the motion of a charged quantum particle confined to the plane under the action of  the uniform magnetic field orthogonal to the plane, the whole $L^2$ space splits into the sequence of orthogonal (Landau) subspaces $\Lc_q$, $q=0,1,\dots$, invariant for the operator. These subspaces are interconnected by creation, $Q^+$, and annihilation, $Q$, operators, $Q^+:\Lc_q\to\Lc_{q+1}$, $Q:\Lc_q\to\Lc_{q-1}$, which are isometries, up to a numerical coefficient, while $Q:\Lc_0\to\{0\}$. These operators, discovered in 1928 by V.Fock \cite{Fock}, play an important role in the analysis of various problems concerning two- and three-dimensional quantum models with magnetic fields. The fact important for us is that the creation and annihilation operators enable one to reduce a Toeplitz operator in the Landau subspace $\Lc_q$, $q>0,$ to the unitary equivalent Toeplitz operator in the lowest subspace $\Lc_0$, however, with a different symbol. Such reduction was first discovered in \cite{BPR}. A few years later, in \cite{RT1,RT2}, such equivalence, (more exactly, 'almost equivalence', up to a very week error term) has been established for an approximate creation-annihilation structure arising for the Landau Hamiltonian with weakly (and not that weakly) perturbed uniform magnetic field.

It turns out that the Bergman type spaces of polyanalytic functions admit such creation-annihilation structure as well. The ambient Hilbert space $L^2(\O)$, for a domain $\O\subset\C$, splits into the direct orthogonal sum of 'true polyanalytic' subspaces $\Ac_{(j)}(\O):=\Ac_j(\O)\ominus\Ac_{j-1}(\O),$ where $\Ac_j(\O)$ consists of polyanalytic functions, $\Ac_j(\O)=\{f\in L^2, \bar{\partial}^jf=0\},$ and, for the case of the disk or the half-plane, also of analogous spaces of anti-polyanalytic functions. This property was established by Ramazanov in \cite{Ramazanov1999,Ramazanov2002} and, in a different  way, by Y.Karlovich in \cite{Karlovich} for the Bergman space  on the unit disk, by N.Vasilevski, in \cite{V2} and Y.Karlovich--L.Pessoa in \cite{KarlPessIEOT}, for the Bergman space  on the upper half-plane and by N.Vasilevski in \cite{V1} for the Fock space. In these papers, it was found that the creation-annihilation operators, acting as partial isometries,  can be represented as two-dimensional singular integral operators.

In the present paper we follow the pattern of the analysis of the (perturbed) Landau Hamiltonian and establish reduction theorems which associate with a Toeplitz operator in a polyanalytic Bergman type space another Toeplitz operator, but now acting in the corresponding usual analytic Bergman type space. The symbol of the new operator is obtained from the symbol of the initial operator by applying an elliptic differential operator. Thus, if the initial symbol was not sufficiently smooth, the reduced symbol may turn out to be not a function but a distribution. The new operator turns out to be unitarily equivalent (or, for the case of the Bergman spaces on the disk, cosimilar, see Definition \ref{DefCosimilar}) to the initial one.

Thus, an alternative arises. Should one, for further analysis, consider operators with nice symbols in 'bad' spaces (the polyanalitic spaces are 'bad' due to the absence of many useful properties and structures present for analytic spaces), or, in the opposite, consider operators in nice spaces with 'bad' symbols. The second option seems to be more productive.
 Fortunately, the analysis of Toeplitz operators with distributional (and, more generally, singular) symbols is now available. For operators in the Bergman space in the disk and in the Fock space, such analysis has been performed by the authors in \cite{RV1,RV2}; the case of the Bergman space in the half-plane will be presented in \cite{RV4}.  Using this analysis, we find conditions for boundedness and compactness of Toeplitz operators in true polyanalytic Bergman type spaces, which turn out to be similar to the conditions in the standard Bergman spaces, and discuss the questions of uniqueness of the symbol and the finite rank problem, which differ somewhat from the classical ones.

 The results are presented for the case of Bergman type spaces on domains in the complex plane. For higher dimensions the approach works as well, with certain modifications. These results will be presented elsewhere later.

\bigskip

\section{The structure of poly-Bergman and poly-Fock spaces}
\subsection{Polyanalytic and true polyanalytic spaces}
A Bergman type Hilbert  space  $\mathbf{B}^2(\O)$ is the Hilbert space of solutions of some elliptic equation or system in a domain $\Omega$ in $\mathbb{R}^d$ or $\mathbb{C}^d$, belonging to $L^2(\Omega)$ with respect to some measure $\m$ (Banach spaces, involving $L^p$ theory, are also considered in the literature, but we do not discuss them here). Specific for Bergman type spaces is the existence of the reproducing kernel $\kappa(z,w)$: for any $z\in\Omega$, the linear evaluation functional $\pmb{ev}_z:\mathbf{B}^2(\O)\ni u\mapsto u(z)$ is continuous and therefore admits the representation $\pmb{ev}_z(u)=\langle u(\cdot), \kappa(z,\cdot)\rangle$. A classical Toeplitz operator  in $\mathbf{B}^2(\O)$ with symbol $F\in L^\infty(\Omega)$ is the  operator acting as $\mathbf{T}_F\equiv \mathbf{T}_F(\mathbf{B}^2(\O)) :\mathbf{B}^2(\O)\ni u\mapsto \Pb F u\in\mathbf{B}^2(\O)$, where $\Pb$ is the orthogonal (Bergman) projection $\Pb:L^2(\Omega)\to \mathbf{B}^2(\O)$. In \cite{RV1}, \cite{RV2} an approach has been developed for defining Toeplitz operators in Bergman type spaces with much more general symbols, including $F$ being a distribution in $\Omega$ or $\Omega\times\Omega$ or even a hyper-function in $\Omega$. This approach is based upon defining operators by means of sesquilinear forms with a special structure and further using the reproducing kernel to describe the action of the operator.

The typical and best studied examples here are the classical Bergman space $\mathcal{A}^2(\mathbb{D})$ of   analytic functions on the unit disk $\mathbb{D}$, a similar space $\Ac^2(\Pi)$ of analytic functions on the upper half-plane $\Pi$, both considered as subspaces of $L^2$ with respect to the corresponding Lebesgue area measure $dA(z)$, and the Fock (Bargmann-Segal) space $\mathcal{F}^2$ of entire analytic functions on $\mathbb{C}\equiv \mathbb{R}^2$, belonging to $L^2$ with respect to the plane Gaussian measure $d\m=d\rho \equiv \pi^{-1}e^{-|z|^2}dA(z)$. Since we deal with Hilbert spaces only, we suppress further on the superscript $2$ in the notation of  spaces  and write simply $\Ac(\DD)$, $\Ac(\Pi) $,  $\Fc$, and so on.

In the present paper we study Toeplitz operators in analogous Hilbert spaces of polyanalytic functions, i.e., spaces of solutions of the iterated Cauchy-Riemann equation $\overline{\partial}^j u=0,$\quad $j=2,3,\dots$, where, as usual, $\overline{\partial}=\frac12(\partial_x+i\partial_y)$. The polyanalytic Bergman spaces (or, shorter, poly-Bergman spaces) $\mathcal{A}_j(\mathbb{D})$, $\mathcal{A}_j({\Pi})$, are the spaces of such functions in $L^2$ in the disk, resp., half-plane, and the poly-Fock space $\mathcal{F}_j$ is the space of entire  $j-$analytic functions in $L^2(\C,d\r)$. On properties of these spaces and applications, see the monographs \cite{Balk}, \cite{VBook}, the review articles \cite{Balk2}, \cite{Feucht} and references therein (note that in the monograph \cite{VBook}, for the case of the spaces  on the disk, the poly-Bergman spaces are defined in a slightly different way, as functions satisfying  $(\overline{z}\bar{\partial})^j u=0$; our results essentially carry over to this case as well). Polyanalytic functions find now applications in various problems in quantum and classical physics and mechanics, signal processing, wavelet theory etc.

 Of course,
$\mathcal{A}_{j-1}({\O})\subset \mathcal{A}_j({\O}),$
 therefore, to exclude poly-analytic functions of lower order, we define \emph{true} poly-Bergman and \emph{true} poly-Fock
spaces as
$${\mathcal{A}}_{(j)}(\mathbb{D})=\mathcal{A}_j(\mathbb{D})\ominus\mathcal{A}_{j-1}(\mathbb{D}), \,{\mathcal{A}}_{(j)}(\Pi)=\mathcal{A}_j(\Pi)\ominus\mathcal{A}_{j-1}(\Pi), {\mathcal{F}}_{(j)}=\mathcal{F}_j\ominus\mathcal{F}_{j-1}.$$
These spaces have been introduced and studied in \cite{Ramazanov1999,V1,V2}.
Along with poly-Bergman type spaces $\mathcal{A}_j$, one can introduce anti-poly-Bergman type spaces $\tilde{\mathcal{A}}_j$, i.e., spaces of solutions of the iterated adjoint Cauchy-Riemann equation $\partial^j u=0$, $\partial=\frac12(\partial_x-i\partial_y)$ and the corresponding true anti-poly-Bergman type spaces $\tilde{\mathcal{A}}_{(j)}$. Here, the properties for Bergman spaces and Fock spaces differ essentially. It is established (see \cite{V1}) that true poly-Fock spaces form a complete orthogonal system in $L^2(\C,\rho)$ in the sense that $$\overline{\bigcup_{j=1}^\infty \mathcal{F}_j}=\bigoplus_{j=1}^\infty \Fc_{(j)}=L^2(\C,d\r).$$ On the other hand, any true poly-Bergman space on the upper half-plane  is orthogonal to  any true anti-poly-Bergman space;  and therefore
$$\left(\bigoplus_{j=1}^\infty {\mathcal{A}}_{(j)}(\Pi)\right)\oplus\left(\bigoplus_{j=1}^\infty \tilde{\mathcal{A}}_{(j)}(\Pi)\right)=L^2(\Pi,dA),  $$
with a similar relation for the unit disk.
Since the iterated Cauchy-Riemann operator is elliptic, all the above spaces are reproducing kernel ones.

\subsection{The structure of the Fock spaces.}\label{subsec.Fock} Along with true poly-Fock spaces $\Fc_{(j)}$, we consider the Landau subspaces $\Lc_q\in L^2(\C,dA)$, $q=0,1,\dots$, the eigenspaces of the magnetic Schr\"odinger operator $\Hb=-(\nabla + i\Ab(z))^2$, where $\Ab=\frac12(y,-x)$, is the potential of the uniform magnetic field, $z=x+iy$. As it was found by V.Fock \cite{Fock} in 1928, the eigenspace $\Lc_0$ consists of functions of the form $g=\s u\in L^2(\C,dA)$, $\s(z)=\pi^{-\frac12}e^{-|z|^2/2}=\o(z)^{\frac12}$, with $u$ being an entire analytic function on $\C$. Thus the unitary mapping $\Vb:L^2(\C,d\r)\to L^2(\C,dA)$, $\Vb: u(z)\mapsto \s(z)u(z)$ maps isometrically the Fock space $\Fc=\Fc_{(1)}$ onto $\Lc_0$ . Further on, the 'creation operator' $\Qb^+=(-\partial+\frac12 \bar{z})$ maps the Landau subspace $\Lc_q$ onto $\Lc_{q+1}$, isometrically, up to a constant factor depending on $q$, while the 'annihilation operator' $\Qb=\bar{\partial}+\frac12 z$ maps $\Lc_q$ onto $\Lc_{q-1}$, $q>0,$ $\Qb\Lc_0=\{0\}$. Under the unitary transformation $\Vb$, the operators $\Qb^+,\Qb$ are transformed to the operators
%\begin{equation*}
\begin{equation}\label{CreAnniFock}
\Sbb^+=\Vb^{-1} \Qb^+\Vb=-\partial+\bar{z}, \quad \Sbb=\Vb^{-1} \Qb\Vb=\bar{\partial}.
\end{equation}
%\end{equation*}

Note that for the space $\Vb^{-1}\Lc_{q-1},$
\begin{equation*}
 \bar{\partial}^q(\Vb^{-1}\Lc_{q-1}) = \Sbb^q(\Vb^{-1}\Lc_{q-1}) = \Vb(\Qb^q\Lc_{q-1})) = 0,
\end{equation*}
thus the space $\Vb^{-1}\Lc_{q-1} = \Vb^{-1}((\Qb^+)^{q-1}\Lc_0) = (\Sbb^+)^{q-1}\Fc_{(1)} = (\Sbb^+)^{q-1} \Fc$ consists of $q$-analytic functions. On the other hand, since the subspaces $\Lc_{q-1}=\Qb^{q-1}\Lc_0$ are mutually orthogonal, the subspace $(\Sbb^+)^{q-1} \Fc=\Vb^{-1} (\Qb^+)^{q-1}\Lc_0$ is orthogonal to the previous subspaces $(\Sbb^+)^l\Fc$, $l=0,\dots,q-2$. Therefore,
\begin{equation}\label{SF}
 (\Sbb^+)^{q-1}\Fc=\Fc_{(q)}
\end{equation}
is the space of true  $q$-analytic functions, and the creation and annihilation operators $\Sbb^+$ and $\Sbb$ connect isometrically, up to multiplicative constants, the true poly-Fock spaces $\Fc_{(j)}$.

Another approach to these properties, based upon the Fourier representation of the true poly-Fock spaces is elaborated in \cite{V1}, where, in particular, an exact isometry between these spaces is described.
\begin{theorem}\label{Fockexact}
 Given natural numbers $k < n$, the operator
\begin{equation*}
 \sqrt{\frac{(k-1)!}{(n-1)!}} (\Sbb^+)^{n-k}|_{\Fc_{(k)}} : \ \Fc_{(k)} \ \longrightarrow \ \Fc_{(n)}
\end{equation*}
is an isometric isomorphism, together with its inverse
\begin{equation*}
 \sqrt{\frac{(k-1)!}{(n-1)!}} \Sbb^{n-k}|_{\Fc_{(n)}} : \ \Fc_{(n)} \ \longrightarrow \ \Fc_{(k)}
\end{equation*}
\end{theorem}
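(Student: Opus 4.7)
The plan is to transfer the statement from the true poly-Fock spaces to the Landau subspaces via the unitary operator $\Vb$, where the creation-annihilation algebra is classical. Since $\Vb\Fc = \Lc_0$ and the intertwining $\Vb\Sbb^+ = \Qb^+\Vb$ holds (by \eqref{CreAnniFock}), the identification \eqref{SF} gives $\Vb\Fc_{(q)} = \Vb(\Sbb^+)^{q-1}\Fc = (\Qb^+)^{q-1}\Lc_0 = \Lc_{q-1}$, so $\Vb$ restricts to a unitary isomorphism $\Fc_{(q)}\to\Lc_{q-1}$ for each $q\geq 1$. Hence it suffices to establish that the normalized maps $\sqrt{(k-1)!/(n-1)!}(\Qb^+)^{n-k}|_{\Lc_{k-1}}$ and $\sqrt{(k-1)!/(n-1)!}\Qb^{n-k}|_{\Lc_{n-1}}$ are mutually inverse isometric isomorphisms between $\Lc_{k-1}$ and $\Lc_{n-1}$.

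Next, a direct computation from $\Qb=\bar\partial+\tfrac12 z$ and $\Qb^+=-\partial+\tfrac12\bar z$ gives the canonical commutation relation $[\Qb,\Qb^+]=I$, and an easy induction yields $\Qb(\Qb^+)^m=(\Qb^+)^m\Qb + m(\Qb^+)^{m-1}$. Applied to any $\phi_0\in\Lc_0=\ker\Qb$, this shows $\Qb(\Qb^+)^q\phi_0 = q(\Qb^+)^{q-1}\phi_0$; since $\Lc_q = (\Qb^+)^q\Lc_0$, one concludes that $\Qb^+\Qb = qI$ on $\Lc_q$ and hence $\Qb\Qb^+ = (q+1)I$ on $\Lc_q$. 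For $\phi\in\Lc_{k-1}$, setting $\phi_j=(\Qb^+)^j\phi\in\Lc_{k-1+j}$, a telescoping computation
$\|\phi_{j+1}\|^2 = \langle \Qb\Qb^+\phi_j,\phi_j\rangle = (k+j)\|\phi_j\|^2$
produces $\|(\Qb^+)^{n-k}\phi\|^2 = k(k+1)\cdots(n-1)\|\phi\|^2 = \tfrac{(n-1)!}{(k-1)!}\|\phi\|^2$, so the normalized creation map is an isometry. Surjectivity is automatic since $\Lc_{n-1}=(\Qb^+)^{n-1}\Lc_0=(\Qb^+)^{n-k}\Lc_{k-1}$.

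For the inverse, I would use that $\Qb|_{\Lc_q}$ is the Hilbert-space adjoint of $\Qb^+|_{\Lc_{q-1}}:\Lc_{q-1}\to\Lc_q$; the formal-adjoint identity for $\Qb,\Qb^+$ on $L^2(\C,dA)$ already sends the correct Landau subspaces into each other, so no projection is required. Iterating, $\bigl((\Qb^+)^{n-k}|_{\Lc_{k-1}}\bigr)^* = \Qb^{n-k}|_{\Lc_{n-1}}$, and since the adjoint of a surjective isometry equals its inverse, the normalized annihilation map is the inverse of the normalized creation map. Pulling back through $\Vb$ (which conjugates $\Qb^+\leftrightarrow\Sbb^+$ and $\Qb\leftrightarrow\Sbb$) completes the proof.

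The main obstacle, in a fully self-contained treatment, is the careful bookkeeping with the unbounded first-order differential operators $\Qb,\Qb^+$: verifying that the commutator manipulations and the adjointness are valid on the relevant dense domains, and that each $\Lc_q$ is invariant under the normal-ordered products $\Qb^+\Qb$ and $\Qb\Qb^+$. Once the shift actions $\Qb^+:\Lc_q\to\Lc_{q+1}$ and $\Qb:\Lc_q\to\Lc_{q-1}$ are taken for granted (as recalled earlier in Subsection~\ref{subsec.Fock}), everything reduces to routine operator-algebraic bookkeeping of the factorials.
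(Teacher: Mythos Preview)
Your argument is correct. The transfer via $\Vb$ to the Landau subspaces, the commutator identity $[\Qb,\Qb^+]=I$, the resulting relations $\Qb^+\Qb=qI$ and $\Qb\Qb^+=(q+1)I$ on $\Lc_q$, and the telescoping norm computation all go through, and the adjointness step is legitimate once the shift actions $\Qb^+:\Lc_q\to\Lc_{q+1}$, $\Qb:\Lc_q\to\Lc_{q-1}$ are accepted (as the paper explicitly does in Subsection~\ref{subsec.Fock}).

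Note, however, that the paper does not supply its own proof of this theorem: it is stated as a result imported from \cite{V1}, where the argument is based on an explicit Fourier (Bargmann-type) representation of the true poly-Fock spaces rather than on the creation--annihilation algebra. Your route is thus genuinely different from the one the paper points to. The Fourier-representation approach in \cite{V1} has the advantage of producing the isometry constants directly from an explicit unitary model and of yielding additional structural information about $\Fc_{(q)}$; your approach, by contrast, stays entirely within the operator-algebraic framework already set up in Subsection~\ref{subsec.Fock} and requires no new machinery beyond the CCR, which makes it more self-contained in the context of the present paper.
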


% Further on, since the system of subspaces $\Lc_q$ is complete in $L^2(\C,dA)$, the system of true poly-Fock spaces $\Fc_{(j)}$ is complete in $L^2(\C,d\r)$.

\subsection{The structure of the Bergman spaces on the half-plane and on the disk.} Although the Bergman spaces on the disk and on the half-plane are related by the Caley transform, the properties of the corresponding true poly-Bergman  spaces differ somewhat.

For the case of the Bergman space on the upper half-plane, with the \emph{standard orthonormal basis} $\sqrt{\frac{k+1}{\pi}}\frac{(z-i)^{k}}{(z+i)^{k+1}},\, k=0,1,\dots$ there exists a system of creation and annihilation operators, described in \cite{V2,V3, KarlPessIEOT}. These operators are two-dimensional singular integral operators,
\begin{equation*} %\label{SP}
(\Sbb_\P u)(w)=-\frac1\pi \int_{\P}\frac{u(z)dA(z)}{(z-w)^2} \qquad \text{and} \qquad (\Sbb_\P^*u)(w)=-\frac1\pi \int_{\P}\frac{u(z)dA(z)}{(\bar{z}-\bar{w})^2}.
\end{equation*}
Understood in the principal value sense, they are bounded in $L^2(\Pi)$ and adjoint to each other. They are, in fact,  the Beurling--Ahlfors operators compressed to the half-plane, and  are surjective isometries,
\begin{equation}\label{SBP}
  \Sbb_\P:  \Ac_{(j)}(\P)\to \Ac_{(j+1)}(\P), \quad \Sbb_\P:  \tilde{\Ac}_{(j)}(\P)\to \tilde{\Ac}_{(j-1)}(\P), \quad j>1,
\end{equation}
and
\begin{equation}\label{S*BP}
 \Sbb_\P^*:  \Ac_{(j+1)}(\P)\to \Ac_{(j)}(\P), \quad \Sbb_\P^*:  \tilde{\Ac}_{(j-1)}(\P)\to \tilde{\Ac}_{(j)}(\P), \quad j>1,
\end{equation}
while
\begin{equation*}%\label{S0P}
    \Sbb_\P^*:\Ac_1(\P)\to \{0\}, \quad \Sbb_\P : \tilde{\Ac}_1(\P)\to \{0\}.
\end{equation*}

Thus, we have surjective isometries
\begin{equation*}%\label{StruP}
  (\Sbb_\P)^j\Ac(\P) \equiv (\Sbb_\P)^j\Ac_{(1)}(\P)=\Ac_{(j+1)}(\P)
\end{equation*}
and
\begin{equation*}%\label{S*truP}
  (\Sbb_\P^*)^j\tilde{\Ac}(\P) \equiv (\Sbb_\P^*)^j\tilde{\Ac}_{(1)}(\P)=\tilde{\Ac}_{(j+1)}(\P).
\end{equation*}
Formulas \eqref{SBP}, \eqref{S*BP}, similar to \eqref{SF},  justify calling  $\Sbb_\P,\Sbb^*_\P$ creation and annihilation operators.

\begin{remark} Here one can notice a certain discrepancy in notations: $\Sbb$ denotes the annihilation operator in the poly-Fock spaces while $\Sbb_\P$ denotes the creation operator in the poly-Bergman spaces -- however, this is the tradition we do not want to break.
\end{remark}

The operators $\Sbb_\P^j$, restricted to $\Ac(\P)$, admit a representation, found in \cite{PessoaPW}, which is much more convenient for using in further reductions.
\begin{theorem}[{\cite[Theorem 3.3]{PessoaPW}}] For $u\in\Ac(\P)$,
\begin{equation}\label{TransP}
   ( \Sbb_\P^j u)(z)=\frac{{\partial}^{j}[(z-\bar{z})^{j}u(z)]}{j!},  j\ge0.
\end{equation}
\end{theorem}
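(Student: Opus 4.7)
I would proceed by induction on $j$, the case $j = 0$ being trivial. The heart of the argument is the following sub-lemma, which for $\ell = 0$ gives the base case $j = 1$:
\[
\Sbb_\P\bigl[(z-\bar z)^\ell a(z)\bigr](w) \;=\; \frac{1}{\ell+1}\,\partial_w\bigl[(w-\bar w)^{\ell+1}a(w)\bigr], \qquad a \in \Ac(\P),\ \ell \ge 0
\]
(up to the sign convention fixed by the definition of $\Sbb_\P$). The key analytical observation is $\bar\partial_z(z-\bar z)^{\ell+1} = -(\ell+1)(z-\bar z)^\ell$, which together with the holomorphy of $a$ and of $(z-w)^{-2}$ off $z = w$ yields
\[
\frac{(z-\bar z)^\ell\,a(z)}{(z-w)^2} \;=\; -\frac{1}{\ell+1}\,\bar\partial_z\!\left[\frac{(z-\bar z)^{\ell+1}a(z)}{(z-w)^2}\right]
\]
pointwise on $\P\setminus\{w\}$. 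Applying Stokes' theorem on $\P$ with the small disk $\{|z-w|<\varepsilon\}$ excised converts the singular integral defining $\Sbb_\P$ into a sum of two boundary contributions; crucially, the contribution along $\R$ vanishes \emph{identically} because $(z-\bar z)^{\ell+1}\big|_{\R}\equiv 0$, so only the small-circle integral around $z=w$ survives.

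The residual circular integral is computed by parameterizing $z=w+\varepsilon e^{i\theta}$ and Taylor-expanding both $(z-\bar z)^{\ell+1}=[(w-\bar w)+2i\varepsilon\sin\theta]^{\ell+1}$ and $a$ around $w$. After integration in $\theta$, only the two terms of total $\varepsilon$-order zero survive---those with indices $(k,n)=(0,1)$ and $(1,0)$ in the bilinear expansion---and they combine to give exactly $\partial_w[(w-\bar w)^{\ell+1}a(w)]/(\ell+1)$, yielding the sub-lemma.

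For the inductive step proper, Leibniz expands the induction hypothesis as
\[
\Sbb_\P^j u \;=\; \sum_{\ell=0}^{j}\binom{j}{\ell}\,\frac{(z-\bar z)^\ell\,u^{(\ell)}(z)}{\ell!},
\]
so $\Sbb_\P^j u$ is a linear combination of terms to which the sub-lemma applies. Applying $\Sbb_\P$ term by term, expanding $\partial[(z-\bar z)^{\ell+1}u^{(\ell)}]=(\ell+1)(z-\bar z)^\ell u^{(\ell)}+(z-\bar z)^{\ell+1}u^{(\ell+1)}$, and regrouping via Pascal's identity $\binom{j}{\ell-1}+\binom{j}{\ell}=\binom{j+1}{\ell}$ recovers exactly the Leibniz expansion of $\partial^{j+1}[(z-\bar z)^{j+1}u]/(j+1)!$, closing the induction.

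The main technical issue I anticipate is rigorously justifying the Stokes step for general $a\in\Ac(\P)$ without imposing boundary values or decay at infinity. Fortunately, the identical vanishing of the $\R$-boundary term eliminates any need for decay at infinity; what remains is absolute integrability of the rewritten integrand on $\P\setminus\{|z-w|<\varepsilon\}$, which follows from the pointwise bound $|(z-\bar z)^{\ell+1}/(z-w)^2|\lesssim|z-\bar z|^{\ell-1}$ near $w$ together with the $L^2$-integrability of $a$ on $\P$, after an approximation on the standard basis $(z-i)^k/(z+i)^{k+1}$ and extension by $L^2$-continuity.
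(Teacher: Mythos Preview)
The paper does not prove this theorem; it is quoted verbatim from \cite[Theorem~3.3]{PessoaPW} and used as a black box, so there is no in-paper argument to compare your proposal against.

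That said, your inductive strategy via Stokes is the natural one and is essentially what underlies the original proof. One genuine technical point deserves attention, though. Your sub-lemma is stated for inputs of the form $(z-\bar z)^{\ell}a(z)$ with $a\in\Ac(\Pi)$, but such functions need not lie in $L^{2}(\Pi)$ once $\ell\ge 1$ (already for the basis element $a(z)=(z+i)^{-2}$ one has $\int_{\Pi}y^{2\ell}|a|^{2}\,dA=\infty$), so $\Sbb_{\Pi}$ is not a priori defined on them, and the claim that the vanishing of the $\R$-boundary term ``eliminates any need for decay at infinity'' is not quite right: you still must control the large-semicircle contribution in Stokes, and for a single term that contribution can diverge. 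The fix is not to split into monomials at all: apply the Stokes argument directly to the full induction hypothesis $\Sbb_{\Pi}^{j}u=\partial^{j}[(z-\bar z)^{j}u]/j!$, which \emph{is} in $L^{2}(\Pi)$ (indeed in $\Ac_{(j+1)}(\Pi)$), using the identity $\bar\partial_{z}\bigl[(z-\bar z)\,\Sbb_{\Pi}^{j}u\bigr]=-\,\Sbb_{\Pi}^{j}u$, and only afterwards verify algebraically that $\partial\bigl[(z-\bar z)\,\partial^{j}((z-\bar z)^{j}u)\bigr]=\tfrac{1}{j+1}\partial^{j+1}[(z-\bar z)^{j+1}u]$. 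Establishing the identity first for the basis functions $(z-i)^{k}/(z+i)^{k+1}$, where all integrals converge absolutely, and then extending by $L^{2}$-continuity of $\Sbb_{\Pi}$ and of the right-hand side (the latter being the isometric image under $\Sbb_{\Pi}^{j+1}$), is then clean.
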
Now we pass on to the case of the Bergman space on the unit disk $\DD$, with the \emph{standard orthonormal basis} $\sqrt{\frac{k+1}{\pi}}z^k, \, k=0,1,\dots$. Similar to the case of the half plane we define the operators
\begin{equation*} %\label{SD}
    (\Sbb_\DD u)(w)=-\frac1\pi \int_{\DD}\frac{u(z)dA(z)}{(z-w)^2} \qquad \text{and} \qquad (\Sbb_\DD^* u)(w)=-\frac1\pi \int_{\DD}\frac{u(z)dA(z)}{(\bar{z}-\bar{w})^2}.
\end{equation*}
These operator are however not isometries of true poly-Bergman spaces but only partial isometries.
Let $\Ls_j$ denote the one-dimensional space $\{\t \bar{z}^{j-1}, \t\in \C\}$, $\widetilde{\Ls}_j=\{\t {z}^{j-1}, \t\in \C\}$.
As shown in \cite[Theorem~3.5]{Karlovich}, the operator $S_\DD$ acts surjective isometrically:
\begin{equation*}%\label{S*Karlo}
    \Sbb_\DD: \Ac_{(j)}(\DD)\ominus\Ls_j\to\Ac_{(j+1)}(\DD),
\end{equation*}
while   $\Sbb_\DD\Ls_j=\{0\}$ (with a natural modification for $\Sbb_\DD^*$.)

Our aim now is to present differential operators which can replace  $\Sbb_\DD^j$,  similarly to \eqref{TransP}. We apply the results of A. Ramazanov \cite[Theorem 1]{Ramazanov1999}, see also \cite[Theorem 1, Corollary 2]{Ramazanov2002}, and \cite[Theorem~4.1]{Pessoa15}. As it has been proved there (in the notations of the present paper),
the operator
\begin{equation}\label{SDD}
    \Si_j: \Ac_{(1)}(\DD)\ni u\mapsto \partial^{j}((1-|z|^2)^{j} u)
\end{equation}
is a bounded and boundedly invertible operator from $\Ac_{(1)}(\DD)\equiv\Ac(\DD)$ onto $\Ac_{(j+1)}(\DD)$. This means that there exist constants $C_j$ such that
\begin{equation*}%\label{SDnorm}
    C_j^{-1}\|u\|_{L^2}\le\|\Si_j u\|_{L^2}\le C_j\|u\|_{L^2} , u\in \Ac(\DD).
\end{equation*}
In other words, the mapping  $\Si_j$ is a Banach isomorphism, but not an isometry, of the standard Bergman space $\Ac(\DD)$ onto the true poly-Bergman space $\Ac_{(j+1)}(\DD)$.
\begin{remark}
It might be tempting to use the isometry of Bergman spaces $W:u(z)\mapsto (1-z)^{-1} u(i\frac{z+1}{1-z})$, $W:\Ac(\DD)\to\Ac(\Pi)$, generated by the standard M\"obius transform, to derive unitary isomorphisms in the differential form between true poly-Bergman spaces on the disk from the ones found for the half plane, instead of Banach isomorphisms. This idea is, unfortunately, doomed since true poly-Bergman spaces of order higher than 1 are not invariant under conformal mappings, as simple examples (say, $u(z)=|z|^2$) show (one more drawback of polyanalytic spaces).
\end{remark}

\section{Toeplitz operators with distributional symbols}\label{Sect:ToeplitzDistr}
The standard definition of a Toeplitz operator in a Bergman type space $\Bb^2(\O)\subset L^2(\O)$ as $u\mapsto \Pb Fu$, where $\Pb $ is the orthogonal projection from $L^2(\O)$ to $\Bb^2(\O)$, works nicely  for bounded functions $F$, since $F u$ may be outside $L^2(\O)$ for an unbounded $F$. This restriction can be somewhat relaxed, by admitting \emph{some} classes of functions $F\in L^2(\O)$, since functions in  Bergman type spaces are infinitely differentiable and, therefore, locally bounded. However further extension of \emph{this} definition to more singular objects $F$ acting as symbols encounters serious obstacles.

In  \cite{RV1,RV2}, an approach has been developed to defining Toeplitz operators with fairly singular symbols. This approach is based upon a representation of  usual Toeplitz operators by means of  sesquilinear forms. If, initially, $F$ is a bounded function on $\O$, the sesquilinear form of the Toeplitz operator $\Tb_F=\Pb F$ in $\Bb^2$ can be transformed to
\begin{equation}\label{ToeplFormGen}
    \langle \Tb_F u,v\rangle=\langle \Pb F u,v\rangle= \langle  F u, \Pb v\rangle = \langle F u,v\rangle, \quad u,v\in\Bb^2.
\end{equation}
The right-hand side in \eqref{ToeplFormGen} may make sense and define a bounded sesquilinear form for rather general objects acting as $F$. Many examples of such objects are  presented in \cite{RV1,RV2,RV4}.

\subsection{Toeplitz operators in Bergman spaces on the disk and on the half-plane with symbols being derivatives of $k$--C measures.}
In $L_2(\O,dA)$, $\O=\DD$ or $\O=\Pi$, the $L^2$ scalar product is consistent, up to the complex conjugation, with the standard action of a distribution-function or distribution-measure on a smooth function on $\O$.
 Therefore,  the Toeplitz operator in $\Ac(\O)$ with distributional symbol $F\in\Dc'(\O)$ is determined by the  sesquilinear form on the right-hand side in
\begin{equation}\label{kC2a}
\fb_{F}[u,v]=\langle Fu,v\rangle_{L^2(\O,dA)}:=(F,u\bar{v})
\end{equation}
(the expression on the right-hand side in \eqref{kC2a} is understood as the action of the distribution $F$ on the smooth and, actually, real-analytic, function $u(z)\bar{v}(z)$).
This sesquilinear form is defined initially on the linear space of such functions $u,v\in\Ac$, for which the expression in \eqref{kC2a} is finite. In particular,  in the cases we consider here, the distributions in question admit a natural extension from $\Dc(\O)$ (there are no nontrivial real analytic functions in $\Dc(\O)$) to finite linear combinations of functions in the standard bases. Further, \emph{if} the sesquilinear form  \eqref{kC2a} turns out to be bounded in the norm of $\Ac(\O)$ for such functions, it is extended  to the whole of $\Ac(\O)$ by $L^2-$ continuity. If it the case, the action of the Toeplitz operator with this distributional symbol is
 \begin{equation*}%\label{ActionBerg}
    (\Tb_F u)(z)=\fb_{F}[u, \k(z,\cdot)],
 \end{equation*}
 where, recall, $\k(z,\cdot)$ is the reproducing kernel for the space $\Ac(\O)$.

 Here we describe  briefly the realization of this approach as applied to a wide class of symbols-distributions, being  derivatives of $k$--Carleson measures on the disk or on the half-plane. The details of this realization differ somewhat from the case of the Fock space which will be explained later on.

\begin{definition}
A measure $\n$ on  $\O$ is called $k$--Carleson measure for derivatives (shortly, $k$--C measure) \emph{for the Bergman space} $\Ac(\O)$, $\O$ being $\DD$ or $\P$,
if
\begin{equation*}%\label{kC1}
  \fb_{\n,\k}[u] = \int_\O |\partial^k u(z)|^2 d\n(z)\le C \|u\|^2_{L^2(\O,dA)}
\end{equation*}
for any function $u\in \Ac(\O)$.\\
\end{definition}
In  \cite{RV2,RV4},  for the Bergman spaces on $\DD$ and $\P$, conditions, sufficient for a measure to be a $k$--C measure,  have been found. (These conditions are even necessary for the case of a positive measure.) Under these conditions, extended in a natural way to half-integer values of $k$,  the sesquilinear form
\begin{equation}\label{kC2}
    \fb_{\n,\a,\b}[u,v]=\int_{\O}\partial^{\a} u\overline{\partial^\b v}d\n\equiv(-1)^{\a+\b}(\partial^\a\bar{\partial}^\b\n, u\bar{v}), \quad u,v\in\Ac(\O), \, \a+\b=2k,
\end{equation}
is bounded in the corresponding Bergman type space $\Ac(\O)$, where the derivative of a measure is understood in the sense of distributions.

With this in view, for $\a,\b\ge0$, we define the Toeplitz operator in $\Ac(\O)$ with symbol $\partial^\a\bar{\partial}^\b \n$ as determined by the sesquilinear form $(-1)^{\a+\b}\fb_{\n,\a,\b}[u,v]$  in \eqref{kC2}, provided that the latter form is bounded in $\Ac(\O)$.

In the above cited papers, some sharp estimates have been established, involving, in particular, the dependence of the norm of the corresponding operator on the differentiation orders $\a,\b$. Here we do not need such detalization, so we  present simplified versions. Below, $B(z,r)$ denotes the disk with center $z$ and radius $r$ (this $r$ can be chosen arbitrarily) and $|\n|$ denotes the variation of the measure $\n$.

\begin{theorem}[{\cite[Theorem 6.1]{RV2}}]\label{EstBergD} Let $\n$ be a measure on $\DD$ and suppose that for some integer or half-integer $k\ge0$,
\begin{equation}\label{CondD}
    C_{k}(\n,\DD)=\sup_{z\in\DD}\left\{(1-|z|)^{-2(k+1)}|\n|(B(z, {\textstyle \frac12}(1-|z|)))\right\}<\infty.
\end{equation}
Then, for any $u,v\in\Ac(\DD)$ and any $\a,\b:\a+\b=2k$, the inequality
\begin{equation*}%\label{IneqD}
|\fb_{\n,\a,\b}[u,v]|\equiv|(\partial^\a\bar{\partial}^\b\n, u\bar{v})|\le c_k C_k(\n,\DD)\|u\|_{\Ac(\DD)}\|v\|_{\Ac(\DD)}.
\end{equation*}
holds, with constant $c_k$ depending on $k$ only, i.e., the measure $\n$ is a $k$-C measure on $\DD$
\end{theorem}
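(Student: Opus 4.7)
My plan is to rewrite the distributional pairing as a classical Stieltjes integral against $d\n$, apply a pointwise Cauchy-type estimate for derivatives of analytic functions, and then combine the two via a Whitney-type covering of $\DD$ by pseudohyperbolic disks tailored to the geometry of the hypothesis~\eqref{CondD}. Since $u,v\in\Ac(\DD)$, the function $u(z)\overline{v(z)}$ is real-analytic on $\DD$, so the distribution $\partial^\a\bar{\partial}^\b\n$ acts by formal integration by parts, giving
\begin{equation*}
(\partial^\a\bar{\partial}^\b\n,u\bar v)=(-1)^{\a+\b}\int_\DD \partial^\a u(z)\,\overline{\partial^\b v(z)}\,d\n(z).
\end{equation*}
Passing to the total variation $|\n|$, it suffices to bound $\int_\DD |\partial^\a u(z)|\,|\partial^\b v(z)|\,d|\n|(z)$ by the right-hand side of the claimed inequality.

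For any integer $m\ge 0$ and $u\in\Ac(\DD)$, the Taylor expansion of $u$ at $z_0\in\DD$ restricted to $B(z_0,R)\subset\DD$ with $R=r(z_0):=\tfrac12(1-|z_0|)$ gives the standard pointwise bound
\begin{equation*}
|\partial^m u(z_0)|\le \frac{c_m}{(1-|z_0|)^{m+1}}\norm{u}_{L^2(B(z_0,R))},
\end{equation*}
with $c_m$ depending only on $m$ (essentially $c_m^2=(m!)^2(m+1)\cdot 4^{m+1}/\pi$). Multiplying the corresponding bounds for $\partial^\a u$ and $\partial^\b v$ produces exactly the exponent $-(\a+\b+2)=-2(k+1)$ appearing in~\eqref{CondD}:
\begin{equation*}
|\partial^\a u(z)|\,|\partial^\b v(z)|\le \frac{c_\a c_\b}{(1-|z|)^{2(k+1)}}\norm{u}_{L^2(B(z,r(z)))}\norm{v}_{L^2(B(z,r(z)))}.
\end{equation*}

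To assemble the estimate, I would choose a countable set $\{z_j\}\subset\DD$ such that the disks $D_j:=B(z_j,r(z_j))$ cover $\DD$, the enlarged disks $D_j^\ast:=B(z_j,2r(z_j))$ have bounded overlap multiplicity $N$, and $1-|z|\asymp 1-|z_j|$ on $D_j$; such a covering is standard for the hyperbolic geometry of $\DD$, and for $z\in D_j$ it satisfies $B(z,r(z))\subset D_j^\ast$. Splitting the integral over the $D_j$, inserting the pointwise bound of the previous step, and using the hypothesis in the form $|\n|(D_j)\le C_k(\n,\DD)(1-|z_j|)^{2(k+1)}$ cancels the $(1-|z_j|)$-factor; the $j$-th contribution is then at most $c_k\,C_k(\n,\DD)\norm{u}_{L^2(D_j^\ast)}\norm{v}_{L^2(D_j^\ast)}$. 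Summing with Cauchy-Schwarz in $j$ combined with bounded overlap yields $\sum_j\norm{u}_{L^2(D_j^\ast)}\norm{v}_{L^2(D_j^\ast)}\le N\norm{u}_{\Ac(\DD)}\norm{v}_{\Ac(\DD)}$, closing the estimate.

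The main obstacle I anticipate is ensuring the final constant depends only on $k$ and not on $\a,\b$ separately: the pointwise constants $c_m$ grow like $m!\,2^m$, so one must check that $c_\a c_\b\le c_{2k}$ up to a numerical factor whenever $\a+\b=2k$, which follows from $\a!\,\b!\le(2k)!$. The half-integer case permitted by \eqref{CondD} would be handled by the symmetric choice $\a=\b=k$ with half-derivatives interpreted as in the discussion preceding the statement.
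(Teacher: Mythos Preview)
The paper does not give its own proof of this statement; it is quoted as \cite[Theorem~6.1]{RV2} and only the statement is reproduced. Your strategy---rewriting $\fb_{\n,\a,\b}$ as the integral $\int_\DD \partial^\a u\,\overline{\partial^\b v}\,d\n$ (which is in fact the \emph{definition} of the form in~\eqref{kC2}), applying the Cauchy $L^2$-estimate for derivatives on a half-distance disk, and summing via a bounded-overlap Whitney covering---is the standard route and is correct.

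One small bookkeeping adjustment: with $r(z)=\tfrac12(1-|z|)$ for the Cauchy radius and $D_j=B(z_j,r(z_j))$, the claimed inclusion $B(z,r(z))\subset D_j^\ast=B(z_j,1-|z_j|)$ for all $z\in D_j$ does not quite hold; the triangle inequality only gives containment in $B\bigl(z_j,\tfrac54(1-|z_j|)\bigr)$, which exits $\DD$. Shrinking the Cauchy-estimate radius to, say, $\tfrac14(1-|z|)$ (or taking the covering disks with a smaller fraction of the boundary distance) restores the inclusion at the cost of a numerical factor absorbed into $c_k$, and the rest of the argument goes through verbatim.
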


The result for the Bergman space on $\Pi$ is the following (see \cite{RV4}).
\begin{theorem}\label{EstPi} Let $\n$ be a measure on $\Pi$ and suppose that for some integer or half-integer $k\ge0$,
\begin{equation}\label{CondBergPi}
    C_{k}(\n,\Pi)=\sup_{z\in\Pi}\left\{(\im z)^{-2(k+1)}|\n|(B(z, {\textstyle \frac12}\im z))\right\}<\infty.
\end{equation}
Then, for any $u,v\in\Ac(\Pi)$ and any $\a,\b:\a+\b=2k$, the inequality
\begin{equation*}%\label{IneqBergPi}
|\fb_{\n,\a,\b}[u,v]|\equiv|(\partial^\a\bar{\partial}^\b\n, u\bar{v})|\le c_k C_k(\n,\Pi)\|u\|_{\Ac(\Pi)}\|v\|_{\Ac(\Pi)}.
\end{equation*}
holds, with constant $c_k$ depending on $k$ only,
i.e., the measure $\n$ is a $k$-C measure on $\Pi.$
\end{theorem}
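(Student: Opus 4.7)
The plan is to follow the scheme of Theorem \ref{EstBergD} but with the hyperbolic geometry of $\DD$ replaced by the Poincar\'e metric on the upper half-plane, so that the factor $1-|z|$ is everywhere replaced by $\im z$. The three ingredients are a pointwise Cauchy-type estimate for derivatives of functions in $\Ac(\Pi)$, a Whitney-type cover of $\Pi$ adapted to $\im z$, and a summation argument exploiting bounded overlap.

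\textbf{Pointwise step.} For $w\in\Pi$ and $r\le\tfrac12\im w$, the disk $B(w,r)$ lies in $\Pi$, so the $L^2$ Cauchy inequality yields $|\partial^\alpha u(w)|^2\le C_\alpha r^{-2(\alpha+1)}\|u\|^2_{L^2(B(w,r))}$ for $u\in\Ac(\Pi)$, and similarly for $\partial^\beta v$. Taking $r=\tfrac14\im w$ and using $\alpha+\beta=2k$, the product bound becomes
\begin{equation*}
 |\partial^\alpha u(w)\,\partial^\beta v(w)|\le c_k (\im w)^{-2(k+1)}\|u\|_{L^2(B(w,\frac14\im w))}\|v\|_{L^2(B(w,\frac14\im w))},
\end{equation*}
with $c_k$ depending only on $k$ (since $\alpha,\beta\le 2k$).

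\textbf{Cover and summation.} Choose $\{z_j\}\subset\Pi$ such that $B_j=B(z_j,\tfrac14\im z_j)$ are pairwise disjoint, the enlarged balls $B_j'=B(z_j,\tfrac12\im z_j)$ still cover $\Pi$, and the doubled balls $B_j^*=B(z_j,\im z_j)$ have bounded overlap multiplicity $N$; on $B_j'$ one has $\im z\asymp\im z_j$ and $B(z,\tfrac14\im z)\subset B_j^*$. Estimating $|\mathfrak{f}_{\nu,\alpha,\beta}[u,v]|\le\int_\Pi |\partial^\alpha u\cdot\partial^\beta v|\,d|\nu|$, splitting over $\{B_j'\}$, and combining the pointwise bound with the hypothesis \eqref{CondBergPi} in the form $|\nu|(B_j')\le C_k(\nu,\Pi)(\im z_j)^{2(k+1)}$,
\begin{align*}
 |\mathfrak{f}_{\nu,\alpha,\beta}[u,v]|
 &\le c_k\sum_j (\im z_j)^{-2(k+1)}\|u\|_{L^2(B_j^*)}\|v\|_{L^2(B_j^*)}\,|\nu|(B_j')\\
 &\le c_k\, C_k(\nu,\Pi)\sum_j \|u\|_{L^2(B_j^*)}\|v\|_{L^2(B_j^*)}\\
 &\le c_k\, N\, C_k(\nu,\Pi)\,\|u\|_{\Ac(\Pi)}\|v\|_{\Ac(\Pi)},
\end{align*}
the last line by Cauchy--Schwarz in $j$ followed by the bounded-overlap property of $\{B_j^*\}$.

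The delicate point is the exponent bookkeeping: it is the identity $\alpha+\beta=2k$ that makes the Carleson weight $(\im z_j)^{2(k+1)}$ from \eqref{CondBergPi} exactly cancel the derivative weight $(\im z_j)^{-2(k+1)}$ produced by the two Cauchy estimates. The extension to half-integer $k$ then requires no extra work, since only the sum $\alpha+\beta$ enters the derivative bound. The half-plane geometry introduces no additional obstacle compared with the disk argument in \cite{RV2}, because the Whitney construction in terms of $\im z$ plays exactly the role that the construction in terms of $1-|z|$ plays there.
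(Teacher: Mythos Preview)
The paper does not actually supply a proof of Theorem~\ref{EstPi}: the result is quoted from \cite{RV4} (listed as ``to appear''), in direct analogy with Theorem~\ref{EstBergD}, which is in turn quoted from \cite[Theorem~6.1]{RV2}. So there is no in-paper proof to compare against; your task was effectively to reconstruct the argument that \cite{RV4} presumably contains.

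Your reconstruction is correct and is exactly the expected route: the $L^2$ Cauchy estimate for $\partial^\alpha u$ on disks of radius comparable to $\im w$, a Whitney-type cover by balls $B(z_j,\tfrac12\im z_j)$ with bounded overlap of the doubled balls, and the cancellation of the weight $(\im z_j)^{-2(k+1)}$ coming from the derivative estimate against the Carleson weight $(\im z_j)^{2(k+1)}$ coming from \eqref{CondBergPi}. This is precisely the half-plane transcription of the disk argument in \cite{RV2}, with $1-|z|$ replaced by $\im z$ throughout, and there is every reason to believe \cite{RV4} proceeds the same way. The only step where one might want an extra word is the existence of the Whitney family with the three stated properties (disjoint $B_j$, covering $B_j'$, bounded-overlap $B_j^*$); this is standard for the hyperbolic metric on $\Pi$, but a one-line reference or a remark that the lattice $\{2^m(n+i):m\in\Z,\,n\in\Z\}$ (suitably thinned) does the job would make the argument self-contained.
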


A bounded sesquilinear form in a Hilbert space generates a bounded operator.
\begin{theorem}\label{OperBddBerg}
Under  conditions \eqref{CondD}, resp., \eqref{CondBergPi}, the Toeplitz operator with distributional symbol $\partial^\a\bar{\partial}^\b\n$, $\a+\b=2k$ is a bounded operator in $\Ac(\DD)$, resp., $\Ac(\Pi)$.
\end{theorem}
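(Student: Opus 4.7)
The theorem is essentially a packaging statement: once Theorems~\ref{EstBergD} and~\ref{EstPi} are established, the boundedness of the operator follows from a standard Hilbert space argument. My plan is to go through the sesquilinear form $\to$ operator correspondence and use the reproducing kernel to extract the operator-theoretic formulation.

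First I would record that, by Theorem~\ref{EstBergD} (resp.\ Theorem~\ref{EstPi}), the sesquilinear form
\begin{equation*}
    \fb_{\n,\a,\b}[u,v]=(-1)^{\a+\b}(\partial^\a\bar{\partial}^\b\n,\,u\bar v)
\end{equation*}
is defined and satisfies
\begin{equation*}
    |\fb_{\n,\a,\b}[u,v]|\le c_kC_k(\n,\O)\,\|u\|_{\Ac(\O)}\|v\|_{\Ac(\O)}
\end{equation*}
for all $u,v$ in the dense subspace of $\Ac(\O)$ spanned by the standard orthonormal basis, on which the action of $\partial^\a\bar\partial^\b\n$ against $u\bar v$ is literally well defined. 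Boundedness in the Hilbert norm allows us to extend $\fb_{\n,\a,\b}$ by continuity to a bounded sesquilinear form on all of $\Ac(\O)\times\Ac(\O)$, with the same bound $c_kC_k(\n,\O)$.

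Next, for fixed $u\in\Ac(\O)$ the map $v\mapsto\overline{\fb_{\n,\a,\b}[u,v]}$ is a bounded antilinear functional on $\Ac(\O)$, so by the Riesz representation theorem there is a unique $\Tb u\in\Ac(\O)$ with $\fb_{\n,\a,\b}[u,v]=\langle \Tb u,v\rangle_{\Ac(\O)}$ and $\|\Tb u\|_{\Ac(\O)}\le c_kC_k(\n,\O)\|u\|_{\Ac(\O)}$. Linearity of $u\mapsto\Tb u$ is inherited from the linearity of the form in its first argument. This operator is, by the definition given at the start of Section~\ref{Sect:ToeplitzDistr}, the Toeplitz operator $\Tb_{\partial^\a\bar\partial^\b\n}$ in $\Ac(\O)$; indeed, the reproducing property yields the pointwise formula
\begin{equation*}
    (\Tb_{\partial^\a\bar\partial^\b\n}u)(z)=\langle \Tb u,\k(z,\cdot)\rangle=\fb_{\n,\a,\b}[u,\k(z,\cdot)],
\end{equation*}
which matches the definition $(\Tb_Fu)(z)=\fb_F[u,\k(z,\cdot)]$ from Section~\ref{Sect:ToeplitzDistr}.

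There is no real obstacle here; the hard analytic content already sits inside Theorems~\ref{EstBergD} and~\ref{EstPi}. The only point requiring care is the passage from the dense subspace where the distributional pairing is defined literally to all of $\Ac(\O)$: one must check that the bounded extension still represents the same sesquilinear form, and in particular that its action on pairs $(u,\k(z,\cdot))$ coincides with the original distributional expression. This is immediate because the reproducing kernel $\k(z,\cdot)$ lies in $\Ac(\O)$ and can be approximated in $\Ac(\O)$-norm by finite linear combinations of the standard basis, on which both sides of the defining identity agree.
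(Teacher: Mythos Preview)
Your proposal is correct and matches the paper's approach: the paper simply remarks that ``a bounded sesquilinear form in a Hilbert space generates a bounded operator'' and states Theorem~\ref{OperBddBerg} without further proof, so all the analytic content is indeed already in Theorems~\ref{EstBergD} and~\ref{EstPi}. Your write-up spells out the Riesz representation step and the reproducing-kernel identification that the paper leaves implicit.
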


As usual, the results on the boundedness of certain classes of Toeplitz operators are accompanied by analogous results on the compactness conditions, proved in a standard way.
\begin{theorem}\label{OperCompBerg} a. Let $\n$ be a measure on $\DD$ and
\begin{equation}\label{CompBerg}
   \lim_{\ve \to 0}\, \sup_{z\in\DD, |z|>1-\ve}\left\{(1-|z|)^{-2(k+1)}|\n|(B(z, {\textstyle \frac12}(1-|z|)))\right\} = 0.
\end{equation}
Then the Toeplitz operator in $\Ac(\DD)$ with distributional symbol   $\partial^\a\bar{\partial}^\b\n$, $\a+\b=2k$, is compact.\\
b. Let $\n$ be a measure on $\Pi$ and
\begin{equation} \label{CompBergP}
  \lim_{\ve \to 0}\,   \sup_{z\in\Pi, \im z<\ve}\left\{(\im z)^{-2(k+1)}|\n|(B(z, {\textstyle \frac12}\im z))\right\} = 0.
\end{equation}
Then the Toeplitz operator in $\Ac(\Pi)$ with distributional symbol   $\partial^\a\bar{\partial}^\b\n$, $\a+\b=2k$, is compact.
\end{theorem}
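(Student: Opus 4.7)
The plan is to exhibit $\Tb_F$ as a norm limit of compact operators, using the vanishing Carleson hypothesis to make the tail controllable. For part (a), pick a small $\eta>0$ and split the measure as $\n=\n_\eta^{\mathrm{in}}+\n_\eta^{\mathrm{out}}$, where $\n_\eta^{\mathrm{out}}$ is the restriction of $\n$ to the boundary strip $\{|z|>1-\eta\}$ and $\n_\eta^{\mathrm{in}}$ is the complementary restriction. Since the map $\n\mapsto \partial^\a\bar{\partial}^\b\n$ and the defining sesquilinear form \eqref{kC2} are both linear in the measure, the Toeplitz operator splits accordingly as $\Tb_F=\Tb_F^{\mathrm{in},\eta}+\Tb_F^{\mathrm{out},\eta}$. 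The analogous splitting is used for part (b), with the horizontal strip $\{\im z<\eta\}$ in place of the annular neighborhood of $\partial\DD$.

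For the boundary piece, I would directly apply Theorem \ref{EstBergD} (resp.\ Theorem \ref{EstPi}) to $\n_\eta^{\mathrm{out}}$. The key geometric observation is that a ball $B(z,\tfrac12(1-|z|))$ can meet $\supp\n_\eta^{\mathrm{out}}\subset\{|w|>1-\eta\}$ only when $|z|>1-2\eta$, so the Carleson quantity $C_k(\n_\eta^{\mathrm{out}},\DD)$ is bounded by the supremum appearing in \eqref{CompBerg} taken only over $|z|>1-2\eta$, which tends to $0$ as $\eta\to 0$ by hypothesis. The same argument with balls $B(z,\tfrac12\im z)$ and hypothesis \eqref{CompBergP} handles the half-plane case. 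Hence $\|\Tb_F^{\mathrm{out},\eta}\|\to 0$.

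For the bulk piece, I would show $\Tb_F^{\mathrm{in},\eta}$ is Hilbert--Schmidt, in particular compact, for each fixed $\eta$. Using the reproducing kernel identity $\partial^\a u(\z)=\int u(w)\,\partial_\z^\a\k(\z,w)\,dA(w)$ together with \eqref{kC2}, the operator $\Tb_F^{\mathrm{in},\eta}$ has integral kernel
\begin{equation*}
K^\eta(z,w)=(-1)^{\a+\b}\int \partial_\z^\a\k(\z,w)\,\overline{\partial_\z^\b\k(z,\z)}\,d\n_\eta^{\mathrm{in}}(\z).
\end{equation*}
Since $\n_\eta^{\mathrm{in}}$ is supported in the compact set $\{|\z|\le 1-\eta\}\subset\DD$, both $\k(\z,w)=\pi^{-1}(1-\bar{\z}w)^{-2}$ and all $\z$-derivatives are uniformly bounded on $\supp\n_\eta^{\mathrm{in}}\times\DD$, so $K^\eta\in L^\infty(\DD\times\DD)\subset L^2(\DD\times\DD)$. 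Therefore $\Tb_F^{\mathrm{in},\eta}$ is Hilbert--Schmidt, and $\Tb_F$ is a norm limit of compact operators, hence compact.

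The main delicate point is the geometric estimate translating the Carleson quantity of the restricted measure $\n_\eta^{\mathrm{out}}$ into the boundary-localized sup in \eqref{CompBerg}; once that containment is in hand, the rest is a straightforward combination of Theorem \ref{EstBergD} with the Hilbert--Schmidt argument. For the half-plane in part (b), an additional subtlety is that $\{\im z\ge\eta\}$ is not compact in $\Pi$: to make the kernel square-integrable one supplements the splitting with a large-$|z|$ truncation and uses the standing boundedness assumption \eqref{CondBergPi}, noting that for fixed $\im z\ge\eta$ the ball $B(z,\tfrac12\im z)$ has radius bounded below, so the Carleson condition forces tail decay and preserves the norm approximation.
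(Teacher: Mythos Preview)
Your argument for part (a) is correct and is precisely the ``standard'' route the paper alludes to: split at a boundary collar, make the outer piece small in operator norm via Theorem \ref{EstBergD}, and check that the compactly supported inner piece yields a Hilbert--Schmidt operator. The geometric containment you state is right, and the Carleson bound forces $|\n_\eta^{\mathrm{in}}|(\{|z|\le 1-\eta\})<\infty$, so the kernel estimate goes through.

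Part (b), however, has a genuine gap, and your proposed fix does not work. The claim that ``the Carleson condition forces tail decay'' is false: \eqref{CondBergPi} only gives $|\n|(B(z,\tfrac12\im z))\le C(\im z)^{2(k+1)}$, which \emph{grows} with $\im z$ and says nothing as $|z|\to\infty$ along $\{\im z\ge\eta\}$. In fact the hypothesis \eqref{CompBergP} alone is not sufficient for compactness. Take $k=0$ and let $\n$ be arc-length on the horizontal line $\{\im w=1\}$. Then \eqref{CondBergPi} holds and \eqref{CompBergP} holds trivially (for $\im z<\tfrac23$ the ball $B(z,\tfrac12\im z)$ misses the line), yet under the Paley--Wiener identification of $\Ac(\Pi)$ with $L^2((0,\infty),(2\xi)^{-1}d\xi)$ the associated Toeplitz operator becomes multiplication by $2\xi e^{-2\xi}$, which has purely continuous spectrum and is not compact. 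Your Hilbert--Schmidt argument for the inner piece also fails in $\Pi$: the region $\{\im z\ge\eta\}$ has infinite area and $|\n_\eta^{\mathrm{in}}|(\Pi)$ may be infinite, so a pointwise bound on the integrand does not place $K^\eta$ in $L^2(\Pi\times\Pi)$. The half-plane details are deferred in the paper to the forthcoming \cite{RV4}; the correct compactness hypothesis there must include a vanishing condition at infinity as well, and with that extra assumption your splitting strategy (now cutting off both near $\im z=0$ and near $\infty$) would go through.
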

Measures satisfying \eqref{CompBerg}, \eqref{CompBergP} are called vanishing $k$--C measures in \cite{RV2}, \cite{RV4}.

\subsection{Toeplitz operators in $\Fc$ with symbols being coderivatives of $k$--FC measures.}
\begin{definition}Let $\n$ be a measure on $\C\equiv\R^2$. The measure $\n$ is called a Fock-Carleson measure for derivatives of order $k$ (shortly, a $k$-FC measure) if
\begin{equation*}%\label{kCF}
    \int_{\C}|\partial^k u|^2\o(z)d|\n|(z)\le \|u\|^2_{\Fc}, u\in\Fc.
\end{equation*}
\end{definition}
In \cite[Theorem 5.4]{RV1} sufficient conditions for a measure on $\C$ to be a $k$-FC measure, which are also necessary for positive measures, have been found. These conditions enable one to extend the notion of $k$-C measure to half-integer values of $k$.

We define a \emph{coderivative} $\pp  F$ of a distribution $F\in\Dc'(\C)$ as
\begin{equation}\label{Coder}
    \pp F=\o(z)^{-1}\partial(\o(z)F)=\left(\partial-{\textstyle\frac12}
    \bar{z}\right)F.
\end{equation}

With this definition,
\begin{equation}\label{coder1}
    (\partial  (\o F), \f)=\langle\pp F,\bar{\f}\rangle_{L^2(\C,d\r)},
\end{equation}
in the case both sides of \eqref{coder1} make sense. Thus, the coderivative of a distribution defined in \eqref{Coder} is consistent with the usual scalar product in the weighted space.

The Toeplitz operator with coderivative of a $k$-FC measure $\n$ is defined by the sesquilinear form
\begin{equation}\label{DefFock}
    \fb_{\a,\b,\n}[u,v]\equiv (-1)^{\a+\b}(\o\pp^{\a}\bar{\pp}^{\b}\n,u\bar{v})\equiv(\o \n, \partial^\a u\overline{\partial^\b v})\equiv\int_{\C}\partial^\a u(z)\overline{\partial^\b v(z)}\o(z)d\n(z).
\end{equation}

A sufficient condition for  boundedness of the sesquilinear form \eqref{DefFock} in $\Fc$ has been found in \cite{RV1}:
\begin{theorem}[{\cite[Theorem 5.4]{RV1}}]\label{ThEstFock} Let, for some integer or half-integer $k\ge0$ the measure $\n$ on $\C\equiv\R^2$ satisfies the condition
\begin{equation}\label{condFock}
 C_k(\n,\C)=\sup_{z\in\C}\{(1+|z^2|)^k|\n|(B(z,r))\}  <\infty.
\end{equation}
Then for any $u,v\in \Fc$ and $\a,\b:\a+\b=2k$, the inequality
\begin{equation*}%\label{IneqFock}
 |\fb_{\n,\a,\b}[u,v]|  \le c_k C_k(\n)\|u\|_{\Fc}\|v\|_{\Fc}
\end{equation*}
holds, with constant $c_k$ depending on $k$ only.
\end{theorem}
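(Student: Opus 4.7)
I would prove this by weighted Cauchy--Schwarz to separate $u$ and $v$, followed by a reproducing-kernel pointwise bound for $\partial^\a u$ made quantitative through a Whitney summation against the Carleson condition \eqref{condFock}.

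\emph{Splitting step.} Since $\a+\b=2k$, set $h(z)=(1+|z|^2)^{\b-k}$, so $h^{-1}(z)=(1+|z|^2)^{\a-k}$. Cauchy--Schwarz in $L^2(\o\,d|\n|)$ applied to $\fb_{\n,\a,\b}[u,v]=\int\partial^\a u\,\overline{\partial^\b v}\,\o\,d\n$ gives
\begin{equation*}
 |\fb_{\n,\a,\b}[u,v]|\le\Bigl(\int_\C|\partial^\a u|^2 h\,\o\,d|\n|\Bigr)^{1/2}\Bigl(\int_\C|\partial^\b v|^2 h^{-1}\,\o\,d|\n|\Bigr)^{1/2},
\end{equation*}
so it suffices to establish the one-sided bound
\begin{equation}\label{plan:onesided}
 \int_\C|\partial^\a u(z_0)|^2 (1+|z_0|^2)^{\b-k}\,\o(z_0)\,d|\n|(z_0)\le c_\a C_k(\n,\C)\,\|u\|_\Fc^2
\end{equation}
together with its twin for $v$ with $\a\leftrightarrow\b$.

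\emph{Localized pointwise bound.} Differentiating the reproducing formula $u(z_0)=\int u(w)e^{z_0\bar w}\,\o(w)\,dA(w)$ in $z_0$ gives $\partial^\a u(z_0)=\langle u,g_{z_0,\a}\rangle_\Fc$ with $g_{z_0,\a}(w)=w^\a e^{\bar z_0 w}$. Completing the square in the exponent,
\begin{equation*}
 |g_{z_0,\a}(w)|^2\,\o(w)=\pi^{-1}|w|^{2\a}e^{|z_0|^2-|w-z_0|^2},
\end{equation*}
so the kernel is Gaussian-concentrated at unit scale around $z_0$ with polynomial envelope $(1+|z_0|^2)^\a$. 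Tiling $\C$ by a unit lattice $\{Q_j\}$ with centres $z_j$ and applying Cauchy--Schwarz on each cell, then a weighted Cauchy--Schwarz in the summation index $j$, yields
\begin{equation*}
 |\partial^\a u(z_0)|^2\,\o(z_0)\le c_\a(1+|z_0|^2)^\a\sum_j e^{-\vk|z_0-z_j|^2}\int_{Q_j}|u|^2\o\,dA
\end{equation*}
for some $\vk>0$.

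\emph{Assembly.} Inserting this into \eqref{plan:onesided} and applying Fubini, the factor $(1+|z_0|^2)^\a(1+|z_0|^2)^{\b-k}=(1+|z_0|^2)^k$ pairs with $d|\n|(z_0)$. Covering the $z_0$-support by balls $B(z_i,r)$, the condition \eqref{condFock} gives $(1+|z_i|^2)^k|\n|(B(z_i,r))\le C_k(\n,\C)$, and the convolution $\sum_i e^{-\vk|z_i-z_j|^2}$ is uniformly bounded in $j$. The remaining $j$-sum is $\sum_j\int_{Q_j}|u|^2\o\,dA=\|u\|_\Fc^2$, producing \eqref{plan:onesided} and hence the theorem.

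The main obstacle is the localization: the Fock reproducing kernel has unit-scale Gaussian concentration but polynomial amplitude $(1+|z_0|^2)^\a$, so one cannot pass from the global bound $|\partial^\a u(z_0)|^2\o(z_0)\le c_\a(1+|z_0|^2)^\a\|u\|_\Fc^2$ to a genuinely local integral over a single ball $B(z_0,R)$ without losing precisely the growth rate needed to match the Carleson exponent. The Whitney-type summation above is what lets the polynomial envelope interact with the Carleson condition cell by cell rather than globally.
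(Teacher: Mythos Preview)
The paper does not contain its own proof of this theorem: it is quoted verbatim as \cite[Theorem 5.4]{RV1} and used as a black box, so there is nothing in the present paper to compare your argument against.

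That said, your plan is the standard route to Carleson-type embeddings in the Fock setting and is sound. The weighted Cauchy--Schwarz split with $h(z)=(1+|z|^2)^{\b-k}$ correctly symmetrizes the two factors, and your identification of $\partial^\a u(z_0)=\langle u,g_{z_0,\a}\rangle_\Fc$ with $|g_{z_0,\a}(w)|^2\o(w)=\pi^{-1}|w|^{2\a}e^{|z_0|^2-|w-z_0|^2}$ is exactly the Gaussian localization that makes the Fock case work. One small point to tighten in an actual write-up: when you pass from the cellwise Cauchy--Schwarz to the claimed inequality
\[
|\partial^\a u(z_0)|^2\,\o(z_0)\le c_\a(1+|z_0|^2)^\a\sum_j e^{-\vk|z_0-z_j|^2}\int_{Q_j}|u|^2\o\,dA,
\]
the naive application of Cauchy--Schwarz in $j$ produces $(1+|z_0|)^{2\a}\sim(1+|z_0|^2)^\a$ only after you absorb the factor $(1+|z_j|)^{2\a}$ into the Gaussian via $(1+|z_j|)\lesssim(1+|z_0|)(1+|z_j-z_0|)$; make this step explicit. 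The assembly via Fubini and the ball-covering against \eqref{condFock} is routine and correct. Your closing remark about why the global bound $|\partial^\a u(z_0)|^2\o(z_0)\le c_\a(1+|z_0|^2)^\a\|u\|_\Fc^2$ is insufficient is apt and shows you understand the real content of the argument.
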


 Thus,  under the condition \eqref{condFock}, the Toeplitz operator $\Tb:u\mapsto \fb_{\n,\a,\b}[u,\k(z,\cdot)]$, is bounded in $\Fc$.
 In a similar way to Theorem \ref{OperCompBerg} the condition for compactness is found:

 \begin{theorem}\label{ThCompFock}
 Suppose that the measure $\n$ on $\C$ satisfies
 \begin{equation*}%\label{CompFock}
    \lim_{R\to\infty}\sup_{z\in\C, |z|>R}\{(1+|z^2|)^k|\n|(B(z,r))\}=0.
 \end{equation*}
 Then the operator $\Tb_{\n,\a,\b}$ is compact.
 \end{theorem}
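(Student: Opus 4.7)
My plan for proving Theorem \ref{ThCompFock} is the splitting argument that underlies Theorem \ref{OperCompBerg}. Fix $R>0$ and decompose $\n=\n_R+\n'_R$, where $\n_R$ is the restriction of $\n$ to $\{|z|\le R\}$ and $\n'_R=\n-\n_R$. By linearity of the form \eqref{DefFock} in the measure argument, $\Tb_{\n,\a,\b}=\Tb_{\n_R,\a,\b}+\Tb_{\n'_R,\a,\b}$. I would show that the tail operator has vanishing norm as $R\to\infty$ and that each truncated operator is compact; since a norm limit of compact operators is compact, this yields the theorem.

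For the tail, the support of $\n'_R$ is contained in $\{|w|>R\}$, so $|\n'_R|(B(z,r))=0$ whenever $|z|\le R-r$, and $|\n'_R|(B(z,r))\le|\n|(B(z,r))$ for $|z|>R-r$. Hence
\begin{equation*}
  C_k(\n'_R,\C)\le\sup_{|z|>R-r}(1+|z|^2)^k|\n|(B(z,r)),
\end{equation*}
which tends to $0$ as $R\to\infty$ by the vanishing hypothesis. Theorem \ref{ThEstFock} applied to $\n'_R$ then gives $\|\Tb_{\n'_R,\a,\b}\|\le c_k\,C_k(\n'_R,\C)\to0$.

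For the truncated piece, set $T_R:=\Tb_{\n_R,\a,\b}$, which is bounded by Theorem \ref{ThEstFock} (note that local finiteness of the Radon measure $|\n|$ ensures $|\n|(\{|z|\le R\})<\infty$). I would use the Hilbert-space characterization that $T_R$ is compact iff $\|T_Ru_n\|_{\Fc}\to0$ for every weakly null sequence $u_n\to0$ in $\Fc$. For each $z\in\C$, the functional $u\mapsto\partial^\a u(z)$ is bounded on $\Fc$, with representer explicitly computable from the reproducing kernel $\k(w,z)=e^{w\bar z}$ and hence locally bounded in $z$. Thus weak convergence gives $\partial^\a u_n(z)\to0$ pointwise, and since $\{\partial^\a u_n\}$ is a locally uniformly bounded family of entire functions, Montel's theorem upgrades this to locally uniform convergence, in particular uniform convergence on $\{|z|\le R\}$. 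From the identity
\begin{equation*}
  \|T_Ru_n\|_{\Fc}^2=\fb_{\a,\b,\n_R}[u_n,T_Ru_n]=\int_{|z|\le R}\partial^\a u_n(z)\,\overline{\partial^\b(T_Ru_n)(z)}\,\o(z)\,d\n(z),
\end{equation*}
and the uniform bound $|\partial^\b(T_Ru_n)(z)|\o(z)\le C_R\|T_Ru_n\|_{\Fc}$ on $\{|z|\le R\}$, I obtain $\|T_Ru_n\|_{\Fc}\le C'_R\sup_{|z|\le R}|\partial^\a u_n(z)|\to 0$, which establishes compactness of $T_R$.

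The step I expect to be the main obstacle is the passage from weak convergence in $\Fc$ to locally uniform convergence of the derivatives $\partial^\a u_n$; it relies on regularity and growth properties of the Fock reproducing kernel and its derivatives, but is otherwise a routine verification. Once this is in hand, both halves of the argument follow the same template as the corresponding Bergman result.
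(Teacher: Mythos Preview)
Your argument is correct and is precisely the ``standard'' splitting argument the paper alludes to; the paper itself does not write out a proof of Theorem~\ref{ThCompFock}, merely remarking (just before Theorem~\ref{OperCompBerg}) that the compactness statements are ``proved in a standard way'' from the boundedness theorems. Your decomposition $\n=\n_R+\n'_R$, the tail estimate via Theorem~\ref{ThEstFock}, and the compactness of the truncated piece via weak-to-uniform convergence on compacta together constitute exactly that standard route.
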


 The iterated coderivative $\pp$  in \eqref{Coder} can be expressed via the usual derivative and vice versa.

 \begin{proposition}\label{PropCoder}Let $F\in\Sc'(\C)$ be a distribution. Then, for some polynomials $\qb_k(s,t)$   and $\tilde{\qb}_k(s,t)$ of degree $k$,
 \begin{equation}\label{Codertrans1}
    \pp^k  =\qb_k(\partial, z)
 \end{equation}
 and
 \begin{equation}\label{Codertrans2}
    \partial^k =\tilde{\qb}_k(\pp,z).
 \end{equation}
 \end{proposition}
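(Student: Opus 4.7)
The plan is to observe that $\pp$ is a polynomial of degree one in two \emph{commuting} operators, so that its powers, as well as the powers of $\partial$, expand by the ordinary binomial theorem.

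First I would recall that in the Wirtinger calculus $\partial\bar z = 0$, so multiplication by $\bar z$ and the derivative $\partial = \tfrac{1}{2}(\partial_x - i\partial_y)$ commute as operators on $\Sc'(\C)$. The definition \eqref{Coder} therefore reads $\pp = \partial - \tfrac{1}{2}\bar z$ as a sum of two commuting terms, and a single application of the binomial theorem gives
\begin{equation*}
    \pp^k = \bigl(\partial - \tfrac{1}{2}\bar z\bigr)^k = \sum_{j=0}^k \binom{k}{j}\bigl(-\tfrac{1}{2}\bar z\bigr)^{k-j}\partial^j,
\end{equation*}
a polynomial of total degree $k$ in $\partial$ and $\bar z$. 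This yields \eqref{Codertrans1}.

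For \eqref{Codertrans2} I would invert the same relation, writing $\partial = \pp + \tfrac{1}{2}\bar z$. A one-line check, $[\pp,\bar z] = [\partial,\bar z] - \tfrac{1}{2}[\bar z,\bar z] = 0$, shows that $\pp$ too commutes with multiplication by $\bar z$, so the binomial theorem applies again and produces
\begin{equation*}
    \partial^k = \bigl(\pp + \tfrac{1}{2}\bar z\bigr)^k = \sum_{j=0}^k\binom{k}{j}\bigl(\tfrac{1}{2}\bar z\bigr)^{k-j}\pp^j,
\end{equation*}
again a polynomial of total degree $k$. Alternatively, both identities can be obtained by a trivial induction on $k$ using only the defining formula for $\pp$, but the commutativity argument makes explicit the polynomials $\qb_k$ and $\tilde\qb_k$.

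There is no real obstacle in the argument. The only subtlety is notational: in view of \eqref{Coder}, the multiplicative variable that actually enters both \eqref{Codertrans1} and \eqref{Codertrans2} is $\bar z$ rather than $z$ as literally written in the proposition, so the statement is to be read with the second polynomial variable standing for multiplication by $\bar z$.
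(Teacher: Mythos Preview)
Your proof is correct. For \eqref{Codertrans1} it is essentially the paper's argument made explicit: the paper simply says ``follows by iteration of \eqref{Coder}'', which amounts to expanding $(\partial-\tfrac12\bar z)^k$, and you have supplied the observation that $[\partial,\bar z]=0$ makes this an honest binomial expansion with explicit coefficients.

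For \eqref{Codertrans2} your route genuinely differs from the paper's. The paper inverts by viewing $\pp^j=\qb_j(\partial,\bar z)$, $j=1,\dots,k$, as a triangular system in the unknowns $\partial,\partial^2,\dots,\partial^k$ and solving it; this works without any commutativity and would succeed even if $\pp$ were a more complicated first-order expression in $\partial$. You instead write $\partial=\pp+\tfrac12\bar z$, note that $[\pp,\bar z]=0$, and apply the binomial theorem a second time. Your approach is shorter and yields the polynomials $\tilde\qb_k$ explicitly, while the paper's triangular-system argument is more robust in that it does not rely on the special commutation $[\partial,\bar z]=0$. Your closing remark that the second polynomial variable is really $\bar z$ rather than $z$ is accurate and worth keeping.
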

\begin{proof} The relation \eqref{Codertrans1} follows by iteration of \eqref{Coder}; the inverse, \eqref{Codertrans2}, is obtained by solving the triangular system of equations $\pp^j  =\qb_j(\partial, z),$ $j=1,\dots,\k$.
\end{proof}
We always mean that (this can be achieved by means of commutation)  in  each term in the polynomials $\qb_k(\partial, z), \tilde{\qb}_k(\pp,z)$, first the multiplication by a power of $z$ and then the differention are applied.

\section{Transformations of Toeplitz operators} \label{secTransform}  Having an operator $\Tb$ in the Hilbert space $\Hc$ and an isometry (or a Banach isomorphism) $\Zb: \Hc\to\Kc$ of $\Hc$ onto another Hilbert space $\Kc$, one can consider the operator $\Tb^\Zb=\Zb\Tb \Zb^*$ in the space $\Kc$ on the proper domain. Of course, a lot of properties of $\Tb$ and $\Tb^\Zb$ coincide. This simple idea can be successfully applied to Toeplitz operators in true Bergman type spaces described above. We will show in this Section that Toeplitz operators with symbol-function in true poly-Fock and in true poly-Bergman spaces on the half-plane and on the disk are unitary equivalent (or, in the latter case cosimilar) to certain  Toeplitz operators in the corresponding standard spaces with symbols obtained by applying proper elliptic differential operators to the initial symbols.

\subsection{Transformation of Toeplitz operators in the true poly-Bergman space on the half-plane.}
Let $F$ be a distribution on $\P$, defined at least on $C^{\infty}(\bar{\Pi})\cap L^1(\Pi).$  We consider the sesquilinear form
\begin{equation*}%\label{PFform}
 \tb_F[u,v] =  \langle F u,v\rangle_{L^2(\Pi,dA)}, \quad u= \Sbb_\Pi^j f\in\Ac_{(j+1)}(\Pi), \quad v=\Sbb_\P^j g\in\Ac_{(j+1)}(\P),
\end{equation*}
with $f,g$ being elements of the standard orthonormal basis
(further on in this subsection we will only use the Hilbert space $L^2(\Pi,dA)$ and therefore we will suppress the notation of the space in the scalar product.)

The following result leads to establishing a relation between Toeplitz operators in the true  poly-Bergman space $\Ac_{(j)}(\Pi)$ and the Bergman space $\Ac(\Pi)$.

\begin{proposition}\label{PropTranshalfplane}
Let $F$ be a distribution in the half-plane $\Pi$. Then
\begin{equation}\label{transPi}
    \langle F\Sbb_\P^j f,\Sbb_\P^j g\rangle=\langle (\Kb F) f,g\rangle,
\end{equation}
with $\Sbb$ defined in \eqref{TransP} where $\Kb$ is a differential operator of order $2j$ having the form
\begin{equation}\label{TRanshalfplane}
    \Kb=\underline{\Kb}(\Delta( y^2\cdot),\bar{\partial}(y\cdot),\partial (y\cdot)),
\end{equation}
with $\underline{\Kb}$ being a polynomial of degree $j.$ Moreover, if we assign the weight $-1$ to the differentiation and the weight $1$ to the multiplication by $y$, with weights adding under the multiplication, then all monomials in $\underline{\Kb}$ have weight $0$.
\end{proposition}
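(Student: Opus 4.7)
The plan is to substitute \eqref{TransP} into $\langle F\Sbb_\P^j f,\Sbb_\P^j g\rangle$, expand by the Leibniz rule using analyticity of $f$ and anti-analyticity of $\bar g$, integrate by parts to push all derivatives onto $F$, and finally rewrite the resulting operator in terms of the weight-$0$ basic operators $\partial(y\cdot),\bar\partial(y\cdot),\Delta(y^2\cdot)$.

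Since $z-\bar z=2iy$, \eqref{TransP} gives $\Sbb_\P^j f=\tfrac{(2i)^j}{j!}\partial^j(y^j f)$, so the integrand equals $\tfrac{4^j}{(j!)^2}F\,\partial^j(y^j f)\,\bar\partial^j(y^j\bar g)$. Leibniz together with $\partial^m f=f^{(m)}$, $\bar\partial^m\bar g=\overline{g^{(m)}}$ expands each factor as $\sum_{k=0}^j c_{j,k}y^k\partial^k f$ and $\sum_{l=0}^j\overline{c_{j,l}}y^l\bar\partial^l\bar g$ with explicit combinatorial constants $c_{j,k}$. Using $\bar\partial f=0=\partial\bar g$ I combine $\partial^k f\cdot\bar\partial^l\bar g=\partial^k\bar\partial^l(f\bar g)$, and then integrate by parts $k$ times in $\partial$ and $l$ times in $\bar\partial$ (with no boundary contributions for $f,g$ in the standard basis) to obtain
$$\langle F\Sbb_\P^j f,\Sbb_\P^j g\rangle=\langle \Kb F,f\bar g\rangle,\qquad \Kb F=\tfrac{4^j}{(j!)^2}\!\sum_{k,l=0}^{j}(-1)^{k+l}c_{j,k}\overline{c_{j,l}}\,\partial^k\bar\partial^l(y^{k+l}F).$$
Each summand has $k+l$ factors $y$ and $k+l$ derivatives and is therefore of weight $0$; the top term $k=l=j$ gives $\Kb$ the asserted differential order $2j$.

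To cast $\Kb$ into the form \eqref{TRanshalfplane} I would prove by induction on $b+c$ the operator identity
$$y^{b+c}\partial^b\bar\partial^c=(y^2\partial\bar\partial)^{\min(b,c)}(y\partial)^{(b-c)_+}(y\bar\partial)^{(c-b)_+}+R_{b,c},$$
where $R_{b,c}$ is a weight-$0$ polynomial of degree $\le\max(b,c)$ in $y\partial$, $y\bar\partial$, $y^2\partial\bar\partial$. The leading monomial has degree $\max(b,c)$, and the commutators $[\partial,M_y]=-\tfrac i2$, $[\bar\partial,M_y]=\tfrac i2$ generate only correction terms $y^{b'+c'}\partial^{b'}\bar\partial^{c'}$ with $b'\le b$, $c'\le c$ of strictly smaller differential order, to which the inductive hypothesis applies. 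Since $y\partial$, $y\bar\partial$, $y^2\partial\bar\partial$ differ from $\partial(y\cdot)$, $\bar\partial(y\cdot)$, $\tfrac14\Delta(y^2\cdot)$ by weight-$0$ lower-degree combinations, the statement transfers verbatim to the basic operators. Expanding each $\partial^k\bar\partial^l(y^{k+l}F)$ appearing in $\Kb$ by Leibniz reduces it to a linear combination of such $y^{b+c}\partial^b\bar\partial^c F$ with $b\le k\le j$, $c\le l\le j$, realizing $\Kb$ as a (non-commutative) polynomial of degree $\le j$ in the basic operators, every monomial of which has weight $0$.

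The computations in the first two paragraphs are mechanical. The substantive obstacle is the inductive identity in the last paragraph: the care lies in verifying that each commutator step simultaneously removes one $y$-factor and one derivative, thus preserving weight while keeping both indices $b,c$ weakly monotone, so that the degree budget $\max(b,c)\le j$ is never exceeded.
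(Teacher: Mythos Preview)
Your argument is correct and follows essentially the same route as the paper: expand via Leibniz using $\bar\partial f=\partial\bar g=0$, integrate by parts to throw all derivatives onto $F$, and observe that every commutation preserves the total weight. The paper carries out the case $j=1$ by hand and then appeals to ``the same machinery with some tedious bookkeeping'' for general $j$, whereas you do the general case directly, obtaining the closed formula $\Kb F=\tfrac{4^j}{(j!)^2}\sum_{k,l}(-1)^{k+l}c_{j,k}\overline{c_{j,l}}\,\partial^k\bar\partial^l(y^{k+l}F)$ and supplying the inductive rewriting in the basic operators; this makes the degree bound $\deg\underline{\Kb}\le j$ (with equality from the $k=l=j$ term) transparent rather than implicit.
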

\begin{proof}We demonstrate the reasoning for the case $j=1$. The general case uses the same machinery with some tedious bookkeeping.
We consider the sesquilinear form $\fb_{F}[f,g]=\langle F\partial (y f),\partial (y g)\rangle\equiv (F,\partial (y f)\overline{\partial (y g)} )$ for $f,g$ being some elements in the standard orthonormal basis in $\Ac(\Pi)$. Due to $\partial^*=-\bar{\partial}$:
\begin{gather}\label{transHalfplane2}
    (F,\partial (y f)\overline{\partial (y g)} ) = (F,(-if+ y\partial f)\overline{(-ig+y\partial g)})\\ \nonumber
    =(F,f\bar{g})+(F,-ify\bar{\partial g})+(F,i\bar{g}y\partial f )+(F,y^2(\partial f) {\bar{\partial} \bar{g}})
    \end{gather}
   (the last transformation uses $\bar{\partial}g=0$). For the second term on the right-hand side in \eqref{transHalfplane2}, by the general rules of manipulation with distributions, we have
\begin{equation*}
 (F,-ify\bar{\partial g})= (-iyF, f\overline{{\partial g}})=(-iyF,\overline{{\partial}(\bar{f}g)})-(-iyF,(\bar{\partial}f)g )
 = (\bar{\partial}(iyF),f\bar{g}),
\end{equation*}
because $\bar{\partial}f=0$. The third term on the right in  \eqref{transHalfplane2} is transformed in a similar way, and for the last one,
 \begin{gather*}%\label{transHalfPlane4}
   (F,y^2\partial f{\bar{\partial} \bar{g}} )=(y^2F,  \bar{\partial}(\partial f \bar{g})-\bar{\partial}(\partial f)\bar{g} ) =-(\bar{\partial}(y^2 F),(\partial f) \bar{g} )\\ \nonumber
   =-(\bar{\partial}(y^2 F), \partial(f \bar{g})-\{f \partial\bar{g})\})=(\partial\bar{\partial}(y^2 F), f \bar{g}),
  \end{gather*}
   again, the terms in curly bracket vanishing due to  $\bar{\partial}g=0$. Collecting the terms in \eqref{transHalfplane2}, after simple transformations, we obtain the required relation.

    Note that we would arrive  at the same result, and even somewhat faster, by making formal commutations of terms in the expression $\langle F\partial (y f),\partial (y g)\rangle$, using the commutation relations
 $[F,\partial]=-(\partial F)$,   and $[[F,\partial],\bar{\partial}]F =\frac14\D F,$
obtain the representation \eqref{TRanshalfplane}.

For higher order, the procedure of transformation is  similar, by means of formally commuting   $F$ and factors in the creation operators $\Sbb_\P^j$ in the expression $\langle F\Sbb_\P^j f,\Sbb_\P^j g\rangle\equiv(F, \Sbb_\P^j f\overline{\Sbb_\P^j g})$, so that the Cauchy-Riemann operator falls on the functions $f,g$, while any commutation with $F$ produces a derivative of $F$. It remains to notice that by the commuting operation of the terms in the expression on the left-hand side in \eqref{transPi} the weight of the terms does not change. Alternatively, one can make the calculations similar to the ones shown above, again by moving the Cauchy-Riemann operator to the functions $f,g$ and on $F$.
 \end{proof}
 As before, the equality \eqref{transPi} extends to the whole of $\Ac(\Pi)$, as soon as we know that the right-hand side or on the left-hand side is a bounded sesquilinear form in $\Ac(\Pi)$. Thus, the statement of Proposition \ref{PropTranshalfplane} can be formulated as the following theorem.

 \begin{theorem}\label{ThEquPi}The operators
 $\Tb_F(\Ac_{(j+1)}(\Pi))$ and $\Tb_{\Kb F}(\Ac(\Pi))$ are unitarily equivalent (up to a numerical factor) as soon as one of them is bounded. In this case, if one of these operators is compact, or belongs to a Schatten class, or  is finite rank, or zero, then the same holds for the other one.
 \end{theorem}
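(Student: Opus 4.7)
The plan is to read Proposition \ref{PropTranshalfplane} as an intertwining identity for sesquilinear forms and combine it with the fact, recorded in \eqref{SBP}, that $V := \Sbb_\P^j$ is a surjective isometry of $\Ac(\Pi) = \Ac_{(1)}(\Pi)$ onto $\Ac_{(j+1)}(\Pi)$. Any discrepancy between this isometry and the differential representation \eqref{TransP} actually used inside the proof of the proposition is a fixed scalar multiple, which is exactly the ``numerical factor'' that the statement tolerates.

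With this in hand, for $f,g$ in the standard orthonormal basis of $\Ac(\Pi)$ Proposition \ref{PropTranshalfplane} reads
\begin{equation*}
\langle F\, Vf,\, Vg\rangle_{L^2(\Pi)} = \langle (\Kb F)\, f,\, g\rangle_{L^2(\Pi)}.
\end{equation*}
Both sides are sesquilinear in $(f,g)$ and extend by linearity to finite linear combinations of basis elements, which are dense in $\Ac(\Pi)$ (and whose $V$-images are dense in $\Ac_{(j+1)}(\Pi)$). The assumption that one of $\Tb_F$, $\Tb_{\Kb F}$ is bounded is precisely the statement that the corresponding form is bounded on this dense subspace; since $V$ is isometric the identity forces the same bound on the other form, and continuous extension yields the identity on the full product of Hilbert spaces. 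Substituting $u=Vf$, $v=Vg$ and using $V^*V = I_{\Ac(\Pi)}$, $VV^* = I_{\Ac_{(j+1)}(\Pi)}$, the identity becomes
\begin{equation*}
\langle \Tb_F u, v\rangle_{\Ac_{(j+1)}(\Pi)} = \langle \Tb_{\Kb F}\, V^* u, V^* v\rangle_{\Ac(\Pi)}, \quad u,v \in \Ac_{(j+1)}(\Pi),
\end{equation*}
i.e., $V^*\Tb_F V = \Tb_{\Kb F}$ up to the scalar factor.

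The remaining assertions on compactness, Schatten-class membership, finite rank and vanishing are then automatic: each is an invariant under unitary conjugation, and multiplication by a nonzero scalar merely rescales all singular values.

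The only genuine obstacle is the density/closure step. The proposition is proved on basis elements only, and one must promote the algebraic identity to an operator identity on the ambient Hilbert space. This is routine \emph{provided} boundedness of one of the two sesquilinear forms is known --- and it is exactly at this juncture that the \emph{a priori} Carleson-type estimates of Theorems \ref{EstBergD}--\ref{EstPi} (applied to $\Kb F$, which involves $2j$ additional derivatives of the original distributional symbol $F$) become the essential input in concrete applications.
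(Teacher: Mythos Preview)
Your argument is correct and follows essentially the same route as the paper. The paper does not write out a separate proof of Theorem \ref{ThEquPi}; it simply notes before the statement that \eqref{transPi} extends to all of $\Ac(\Pi)$ once either side is a bounded form, and then proves the disk analogue (Theorem \ref{Thcosim}) by the same conjugation-by-$\Si_j$ mechanism you use with $V=\Sbb_\Pi^j$. Your observation that the differential expression in the proof of Proposition \ref{PropTranshalfplane} differs from $\Sbb_\Pi^j$ by a constant factor, absorbed into the ``numerical factor'', is exactly right; your final paragraph is a remark on applications rather than part of the proof proper, but it does no harm.
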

\subsection{Transformation of Toeplitz operators in the true poly-Bergman space on the disk.}\label{SubsectDisk}
The case of true poly-Bergman spaces on the disk is considered in the same way, with a single essential difference consisting in replacing the unitary equivalence of Toeplitz operators by their cosimilarity (see Definition \ref{DefCosimilar}). The proposition to follow is analogous to Proposition \ref{PropTranshalfplane}.
\begin{proposition}\label{propTransDisk} Let $F$ be a distribution in the unit circle $\DD$, defined at least on the functions in $C^{\infty}(\bar{\DD})$ Then for functions $f,g$, being linear combinations of the standard basis functions in $\Ac(\DD)$,
\begin{equation}\label{TransDD}
    \fb_F[f,g]\equiv(F\Si_j f,\overline{\Si_j g})\equiv( F,\Si_j f\overline{\Si_j g})=( (\Mb_j F)f,g),
\end{equation}
where $\Si_j$ is the operator \eqref{SDD},  $\Mb$ is an elliptic differential operator of degree $2j-2$, $\Mb=\underline{\Mb}( \D(1-|z|^2)^2\cdot, \partial z(1-|z|^2)\cdot, \bar{z}\bar{\partial}(1-|z|^2)\cdot)$,
with a polynomial $\underline{\Mb}$ of degree $j$.
In this polynomial, if we assign the weight $-1$ to differentiation, the weight $1$ to $(1-|z|^2)$ and the weight $0$ to $z,\bar{z}$, with weights adding under multiplication, all monomials have weight $0$.
\end{proposition}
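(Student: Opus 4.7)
I would follow step for step the strategy of Proposition \ref{PropTranshalfplane}, replacing the half-plane weight $y$ by the disk weight $(1-|z|^2)$. The two new features are that $\partial(1-|z|^2)=-\bar z$ and $\bar\partial(1-|z|^2)=-z$ produce extra factors of $\bar z$ and $z$ that must be absorbed into the building blocks, and that one works on the closed disk rather than the half-plane, with boundary contributions killed by the vanishing of $(1-|z|^2)$ on $\partial\DD$.

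The first step is a Leibniz expansion of both $\Si_jf=\partial^j((1-|z|^2)^jf)$ and $\overline{\Si_jg}=\bar\partial^j((1-|z|^2)^j\bar g)$, using
\begin{equation*}
\partial^a(1-|z|^2)^j=(-\bar z)^a\,\tfrac{j!}{(j-a)!}\,(1-|z|^2)^{j-a},\qquad 0\le a\le j,
\end{equation*}
and its conjugate. This converts $(F,\Si_jf\,\overline{\Si_jg})$ into a double sum, indexed by $0\le a,a'\le j$, of terms proportional to $\bigl(F,\bar z^a z^{a'}(1-|z|^2)^{2j-a-a'}(\partial^{j-a}f)(\overline{\partial^{j-a'}g})\bigr)$.

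The second step is to move the residual derivatives $\partial^{j-a}$ off $f$ and $\bar\partial^{j-a'}$ off $\bar g$ onto $F$. Because $\bar\partial f=0$ and $\partial\bar g=0$, iterated distributional integration by parts gives
\begin{equation*}
\bigl(F,\psi(\partial^bf)(\overline{\partial^{b'}g})\bigr)=(-1)^{b+b'}\bigl(\partial^b\bar\partial^{b'}(\psi F),\,f\bar g\bigr)
\end{equation*}
for any $\psi\in C^\infty(\bar\DD)$, with boundary terms killed by the factor $(1-|z|^2)^{b+b'}$ that is always present in $\psi$. Equivalently, this can be performed by formal commutation, $[F,\partial]=-(\partial F)$, $[F,\bar\partial]=-(\bar\partial F)$, the same device used in the half-plane proof. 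Setting $b=j-a$, $b'=j-a'$ and summing recasts the pairing as $(\Mb_jF,f\bar g)$, with $\Mb_j$ a linear combination of operators of the form $\partial^b\bar\partial^{b'}\bigl[\bar z^a z^{a'}(1-|z|^2)^{b+b'}\,\cdot\,\bigr]$.

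The third step is the weight argument. Assigning $-1$ to each differentiation, $+1$ to each factor $(1-|z|^2)$, and $0$ to each $z,\bar z$, every summand has total weight zero, since it carries exactly $b+b'$ factors of $(1-|z|^2)$ and $b+b'$ derivatives. The three building blocks $\D((1-|z|^2)^2\,\cdot\,)$, $\partial(z(1-|z|^2)\,\cdot\,)$, $\bar z\bar\partial((1-|z|^2)\,\cdot\,)$ are themselves weight-zero (of order $2,1,1$ respectively), and using $\D=4\partial\bar\partial$ one checks that every weight-zero operator of the shape produced above is obtained as a polynomial of degree at most $j$ in these three blocks; matching coefficients pins down $\underline{\Mb}_j$. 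I expect the main obstacle to be precisely this combinatorial rearrangement, which grows unwieldy in explicit form: the weight invariance serves as an a priori constraint that confines the answer to the correct shape, and the scalar coefficients are then read off order by order, exactly as in the half-plane case. The identity, first established on finite linear combinations of the standard basis of $\Ac(\DD)$, extends to the whole space by $L^2$-continuity whenever either side of \eqref{TransDD} is a bounded sesquilinear form.
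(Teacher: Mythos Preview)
Your proposal is correct and follows essentially the same route as the paper's own proof. The paper also works out the low-order case explicitly (computing $\langle F\Si_1 f,\Si_1 g\rangle$ by successive integrations by parts using $\bar\partial f=0$, $\partial\bar g=0$, exactly as in the half-plane proposition) and then appeals to the weight-invariance bookkeeping for the general $j$; your Leibniz-expansion-then-integrate-by-parts presentation is a mild reorganization of the paper's commutation device, which you yourself note is equivalent.
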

\begin{proof}We will suppress ${L^2(\DD,dA)}$ in the notation of the scalar product.  Again, we demonstrate \eqref{TransDD} for $j=2$ and then indicate how the general case is handled.
We perform  calculations, similar to the ones in   \eqref{transHalfplane2}:
\begin{gather}\label{TransDD1}
    \langle F\Si_1 f,\Si_1 g\rangle = C( F, \partial(1-|z|^2)f \overline{\partial(1-|z|^2)g})\\ \nonumber =C(F, \partial((1-|z|^2)f \overline{\partial(1-|z|^2)g}))-
    C(F, (1-|z|^2)f\partial\overline{\partial(1-|z|^2)g})=\\
     \nonumber -C(( 1-|z|^2)\overline{\partial} F,f \overline{\partial(1-|z|^2)g})-((1-|z|^2) F,f\overline{\partial}{\partial(1-|z|^2)\bar{g}}).
    \end{gather}
    For the first term in \eqref{TransDD1}, we have
   \begin{gather}\label{TransDD2}\nonumber -(( 1-|z|^2)\partial F,f \overline{\partial}((1-|z|^2)\bar{g}))=-(( 1-|z|^2)\partial F, \bar{\partial}(f(1-|z|^2)\bar{g}) +(\bar{\partial}f)(1-|z|^2)\bar{g})  \\ =((1-|z|^2)\bar{\partial}( 1-|z|^2)\partial F, f\bar{g})
   ={\textstyle\frac14}(1-|z|^2)^2\D F- (1-|z|^2)z\bar{\partial} F, f\bar{g}).
   \end{gather}
    A similar transformation takes care of the second term in \eqref{TransDD1}.
    After collecting all terms, and moving all differentiations to the left, we obtain \eqref{TransDD}.  For a higher order, one should perform analogous commutations of  the terms in the operator $\Si_j$, noticing that the weight of terms does not change under these commutation.
\end{proof}
Again, the equality \eqref{TransDD} extends to all functions $f,g\in \Ac(\DD)$ as soon as we know that the right-hand side or the left-hand side is a bounded sesquilinear form in $\Ac(\DD)$.

We introduce here the notion of \emph{cosimilar} operators.
\begin{definition}\label{DefCosimilar}Let $\Hc, \Kc$ be Hilbert spaces. Operators $\Tb_1$ in $\Hc$ and $\Tb_2$ in $\Kc$ are called \emph{cosimilar} if there exists a bounded and invertible operator $\Zb:\Hc\to\Kc$ such that $\Tb_1=\Zb^*\Tb_2\Zb$.
\end{definition}

Thus, Proposition \ref{propTransDisk} leads to the following  relation between operators.
\begin{theorem}\label{Thcosim}The operators
 $\Tb_F(\Ac_{(j+1)}(\DD))$ and $\Tb_{\Mb F}(\Ac(\DD))$ are cosimilar as soon as one of them is bounded. In this case, if one of these operators is compact, or belongs to some Schatten class, or is finite rank, or is the zero operator,                                                                               then the same holds for the other one.
\end{theorem}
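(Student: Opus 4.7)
The plan is to take $\Zb = \Si_j : \Ac(\DD) \to \Ac_{(j+1)}(\DD)$, which was already noted after \eqref{SDD} to be a bounded and boundedly invertible (Banach) isomorphism, hence a legitimate choice in Definition \ref{DefCosimilar}. The whole theorem should then reduce to reading Proposition \ref{propTransDisk} on the level of operators rather than on the level of forms. Concretely, I would first recall that by the projection property of the Bergman kernel, the sesquilinear form of $\Tb_F(\Ac_{(j+1)}(\DD))$ on the dense span of standard basis images $\Si_j f$, $\Si_j g$ coincides with $\langle F\,\Si_j f,\Si_j g\rangle_{L^2(\DD,dA)}$, and that the latter equals $\langle(\Mb_j F)f,g\rangle_{L^2(\DD,dA)}$ by \eqref{TransDD}. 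This is precisely the sesquilinear form of $\Tb_{\Mb F}(\Ac(\DD))$ on $f,g$.

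From this identity, read as
\begin{equation*}
 \langle \Tb_F\,\Si_j f,\Si_j g\rangle = \langle \Tb_{\Mb F}f,g\rangle,
\end{equation*}
I would conclude the operator relation $\Tb_{\Mb F} = \Si_j^{\,*}\,\Tb_F\,\Si_j$ on the dense subspace spanned by finite linear combinations of standard basis functions in $\Ac(\DD)$. Then I would argue the boundedness equivalence: if either of the two forms is bounded on its respective space, the other is bounded too (because $\Si_j$ and $\Si_j^{\,*}$ are bounded and boundedly invertible), so it extends by $L^2$-continuity to all of $\Ac(\DD)$, respectively $\Ac_{(j+1)}(\DD)$, and the operator identity persists on the whole spaces. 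This is exactly cosimilarity in the sense of Definition \ref{DefCosimilar} with $\Zb=\Si_j$.

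Finally, for the preservation of compactness, Schatten class membership, finite rank, and the zero property, I would invoke the standard fact that if $\Zb$ is a bounded invertible map between Hilbert spaces, then $\Tb\mapsto \Zb^{\,*}\Tb\Zb$ (and its inverse $\Tb\mapsto (\Zb^{\,*})^{-1}\Tb\Zb^{-1}$) preserve each of these operator ideals and the kernel of the map $\Tb\mapsto 0$; this works here because $\Si_j$ is a Banach isomorphism, even though not unitary. The singular-value inequalities arising from $s_n(\Si_j^{\,*}\Tb\Si_j)\le \|\Si_j\|^2 s_n(\Tb)$, together with the analogous estimate for the inverse, give both directions at once.

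The main substantive step has already been done in Proposition \ref{propTransDisk}; the possible obstacle here is purely bookkeeping, namely making sure that the density of the span of the standard basis in $\Ac(\DD)$, combined with continuity of $\Si_j$, legitimately extends the form identity \eqref{TransDD} from basis elements to arbitrary $f,g\in \Ac(\DD)$ in the bounded case. Since $\Si_j$ is a Banach isomorphism onto $\Ac_{(j+1)}(\DD)$ and both sides of \eqref{TransDD} are continuous sesquilinear forms on their respective Hilbert spaces once one side is shown to be bounded, this extension is routine and the theorem follows.
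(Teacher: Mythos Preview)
Your proposal is correct and follows essentially the same route as the paper: both use the form identity \eqref{TransDD} to obtain $\Si_j^{*}\,\Tb_F\,\Si_j=\Tb_{\Mb F}$, then invoke that $\Si_j$ is a Banach isomorphism to conclude cosimilarity and the invariance of the listed operator-theoretic properties. The only cosmetic difference is that the paper rewrites the relation as $\Tb_F=(\Si_j^{-1})^{*}\,\Tb_{\Mb F}\,\Si_j^{-1}$ and names $\Zb=\Si_j^{-1}$ rather than $\Zb=\Si_j$, which is of course equivalent.
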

\begin{proof}
According to the definition of Toeplitz  operators by means of their sesquilinear forms, the relation \eqref{TransDD} can be written as
\begin{equation*}%\label{TransDDQ}
    \langle \Tb_{F}(\Ac_{(j+1)}(\DD))\Si_jf,\Si_j g\rangle =\langle \Tb_{\Mb F}(\Ac(\DD))f,g\rangle,
\end{equation*}
which means
\begin{equation*}%\label{TransDDOp}
   \Si_j^* \Tb_{F}(\Ac_{(j+1)}(\DD))\Si_j= \Tb_{\Mb F}(\Ac(\DD)), \, \Tb_{F}(\Ac_{(j+1)}(\DD))=(\Si_j^{-1})^* \Tb_{\Mb F}(\Ac(\DD))\Si_j^{-1}.
\end{equation*}
The operator $\Si_j^{-1}$ is a Banach isomorphism,
therefore, the operators in question are cosimilar, with $\Si_j^{-1}$ playing the role of $\Zb.$
  The remaining statements of Theorem follow automatically, since the properties in question are invariant under the multiplication by a bounded invertible operator.
\end{proof}

\subsection{Transformation of Toeplitz operators in the true poly-Fock space.}
We start by recalling Lemma 9.2 in \cite{BPR}.
\begin{theorem}\label{LemmaBPR} Consider the orthonormal in $L^2(\C)$ basis in the lowest Landau subspaces $\Lc_0$, $\vf_k=\frac{z^k}{\sqrt{k!}} \s(z)$ (recall that $\s(z)=\pi^{-1/2}e^{-|z|^2/2}$). Let $F$ be a distribution in $\Sc'(\C)$. Then for any basis functions, $\vf_k,\vf_l$
\begin{equation}\label{BPR1}
    \langle F (Q^+)^q \vf_k,(Q^+)^q \vf_l \rangle_{L^2(\C,dA)}=\langle (\Db_q F)\vf_k,\vf_l \rangle_{L^2(\C,dA)},
\end{equation}
where $\Db_q=\Db_q(\Delta)$ is a constant coefficients differential operator (understood as acting in $\Sc'$), being a polynomial of degree $q$ with positive coefficients of the Laplacian in $\C\equiv\R^2$.
\end{theorem}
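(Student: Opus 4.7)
The plan is to push both $(Q^+)^q$ factors away from $\vf_k$ and $\vf_l$ by a sequence of adjoint and commutator manipulations resting on the three identities
\begin{equation*}
[Q,Q^+]=1, \qquad [Q,L_F]=L_{\bar\partial F}, \qquad [Q^+,L_G]=L_{-\partial G},
\end{equation*}
where $L_H$ denotes multiplication by $H$. The first identity is the canonical commutator behind the Fock structure; the other two are immediate from $Q=\bar\partial+\tfrac12 z$ and $Q^+=-\partial+\tfrac12\bar z$, since the multiplication parts commute with every multiplier. The decisive input is that $\vf_k,\vf_l\in\Lc_0$, so $Q\vf_k=Q\vf_l=0$; this left--right symmetry will annihilate every ``asymmetric'' term at the end.

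First I would use $(Q^+)^*=Q$ on the right slot to rewrite the left-hand side of \eqref{BPR1} as $\langle Q^q F (Q^+)^q\vf_k,\vf_l\rangle$. A binomial induction on $[Q,L_F]=L_{\bar\partial F}$ yields the Leibniz expansion
\begin{equation*}
Q^q L_F = \sum_{j=0}^q \binom{q}{j}\, L_{\bar\partial^j F}\, Q^{q-j},
\end{equation*}
and using $[Q,Q^+]=1$ together with $Q\vf_k=0$ one gets (by another short induction) $Q^{q-j}(Q^+)^q\vf_k=\tfrac{q!}{j!}(Q^+)^j\vf_k$. Assembling,
\begin{equation*}
\langle F(Q^+)^q\vf_k,(Q^+)^q\vf_l\rangle = \sum_{j=0}^q \binom{q}{j}\frac{q!}{j!}\,\bigl\langle (\bar\partial^j F)(Q^+)^j\vf_k,\vf_l\bigr\rangle.
\end{equation*}
For each term I would next commute the remaining $(Q^+)^j$ past the multiplier $\bar\partial^j F$ via the dual identity $L_G(Q^+)^j=\sum_{i=0}^j\binom{j}{i}(Q^+)^{j-i}L_{\partial^i G}$ (induction on $[Q^+,L_G]=L_{-\partial G}$) and then move each $(Q^+)^{j-i}$ back across the inner product onto $\vf_l$. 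Every term with $j-i\ge 1$ dies on $Q^{j-i}\vf_l=0$, so only $i=j$ survives, contributing $\langle(\partial^j\bar\partial^j F)\vf_k,\vf_l\rangle = 4^{-j}\langle(\Delta^j F)\vf_k,\vf_l\rangle$. Collecting constants,
\begin{equation*}
\Db_q(\Delta)=\sum_{j=0}^q \binom{q}{j}\frac{q!}{j!}\frac{1}{4^j}\Delta^j,
\end{equation*}
a polynomial of degree $q$ in $\Delta$ with strictly positive coefficients, which is exactly what is claimed.

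The main obstacle, as I see it, will be interpretational rather than algebraic: each scalar product above must be read as the distributional pairing of $F\in\Sc'(\C)$ against the appropriate Schwartz test function built from $\vf_k,\vf_l$ and their derivatives. The saving observation is that every intermediate product $(Q^+)^m\vf_k\cdot\overline{(Q^+)^n\vf_l}$ is a polynomial times the Gaussian $e^{-|z|^2}$, hence lies in $\Sc(\C)$, and $\Sc(\C)$ is invariant under $Q$, $Q^+$ and ordinary differentiation; the Leibniz and commutator identities therefore hold verbatim in the distributional sense at every step. A structural point worth making explicit is that the equality $Q\vf_k=Q\vf_l=0$ on \emph{both} sides is precisely what forces the surviving terms to pair equal numbers of $\partial$'s and $\bar\partial$'s, which is why the resulting operator is a polynomial in $\Delta$ alone, and not in $\partial$ and $\bar\partial$ separately.
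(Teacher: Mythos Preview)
Your argument is correct. Note, however, that the paper does not give its own proof of this statement: it is quoted verbatim as Lemma~9.2 of \cite{BPR}, and the paper explicitly says that the explicit coefficients of $\Db_q(\Delta)$ derived there are not needed. Your commutator computation is the standard Heisenberg-algebra argument and in fact recovers those coefficients, namely
\[
\Db_q(\Delta)=\sum_{j=0}^{q}\binom{q}{j}\frac{q!}{j!}\,\frac{\Delta^{j}}{4^{j}}=q!\,L_q\!\left(-\tfrac{\Delta}{4}\right),
\]
with $L_q$ the $q$-th Laguerre polynomial. Your closing remark about the distributional interpretation is exactly the point the paper makes immediately after the statement: the pairings are with Schwartz functions of the form (polynomial)$\times e^{-|z|^2}$, so everything is well defined for $F\in\Sc'(\C)$.
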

Note that in \cite{BPR}, the objects in \eqref{BPR1} are defined consistently with our definition in Section \ref{Sect:ToeplitzDistr}; say,
\begin{equation*}
\langle F (Q^+)^q \vf_k,(Q^+)^q \vf_l \rangle_{L^2(\C,dA)}:=(F, (Q^+)^q \vf_k \overline{(Q^+)^q \vf_l}).
\end{equation*}
\medskip
In \cite{BPR} explicit formulas for coefficients of the operator $\Db_q(\Delta)$ have been derived, however they are not needed in our study. A simple extension of this result is, however, important. The polynomial $\Db_q$ does not depend on the choice of the basis functions in \eqref{BPR1}. Therefore, \eqref{BPR1} extends by linearity to finite linear combinations of basis functions and further, if by some reason, for a certain $F$, we happen to know that
\begin{equation*}%\label{BPR2}
 |\langle (\Db_q F)\f,\psi \rangle_{L^2(\C,dA)} |\le C \|\f\|_{L^2(\C,dA)}\|\psi\|_{L^2(\C,dA)}
\end{equation*}
for $\f,\psi$ being such linear combinations, the equality \eqref{BPR1} extends by continuity to the whole of $\Lc_0$.

Now we use the surjective isometry $\Vb:L^2(\C,d\r)\to L^2(\C,dA)$, $\Vb f=\s(z)f,$ which, as it is explained in Section \ref{subsec.Fock},
relates the creation-annihilation structure in true poly-Fock spaces with the one in Landau subspaces, see \eqref{CreAnniFock}.
%Thus the operator $\Sbb^+=\Vb \Qb^+\Vb^{-1}$
Setting $\f=\Vb f\in\Lc_0$, $\psi=\Vb g\in\Lc_0$ for $f,g\in \Fc$, we obtain
\begin{gather*}%label{TansformationFock}
  \nonumber\langle F (\Sbb^+)^q f, (\Sbb^+)^q g\rangle_{L^2(\C,d\r)}= \langle   \Vb F \Vb ^{-1} \Vb (\Sbb^+)^q f,\Vb (\Sbb^+)^q g\rangle_{L^2(\C,dA)}=(F,\Vb (\Sbb^+)^q f\overline{\Vb (\Sbb^+)^q g } )\\=(F,\Vb(\Sbb^+)^q\Vb^{-1} \f \overline{\Vb (\Sbb^+)^q \Vb^{-1}\psi}) =(F,(Q^+)^q\f\overline{(Q^+)^q\psi} )=
  ( (\Dc_q F), \Vb f \bar{\Vb g}).
\end{gather*}
For $F\in \Sc'$, the above equality holds for every  $\f=\Vb f$, $\psi=\Vb g$ with $f,g$ being elements in the standard monomial basis in $\Fc$, and, therefore it extends to the finite linear combinations of such elements. Suppose that we know that the inequality
\begin{equation}\label{bddnessFc}
|( (\Db_q F),\Vb f\overline{\Vb g})|\le C\|f\|_{\Fc}\|g\|_{\Fc}
\end{equation}
is satisfied for elements in this subspace. Since $\Qb^+$ is (up to a constant) a surjective isometry of Landau subspaces,
\eqref{bddnessFc} means that
\begin{equation}\label{bddnessFc1}
  |\fb_F[u,v]|\equiv |\langle Fu,v\rangle_{L^2(\C,d\r)}|\le C\|u\|_{L^2(\C,d\r)}\|v\|_{L^2(\C,d\r)}, \ u=(\Qb^+)^qf\in\Fc_{(q+1)}, \ v=(\Qb^+)^q g\in\Fc_{(q+1)}
\end{equation}
for a dense linear subspace of $f,g$ in $\Fc$, and, therefore, extends to the whole of $\Fc$.
Thus we arrive to the following transformation result.
\begin{proposition}\label{propTransFc}
Let for some $F\in \Sc'(\C)$ the inequality
\eqref{bddnessFc1} be satisfied for all $u,v$ in a dense linear subset of linear combinations of basic functions in the true poly-Fock space $\Fc_{(q+1)}$. Then
\begin{equation*}%\label{propTransFc1}
    \langle F(\Sbb^+)^q f, (\Sbb^+)^q g\rangle_{\Fc_{(q+1)}}=\langle \Vb^{-1}(\Db_q F)\Vb f,g\rangle_{\Fc}=(\Db_q F, \Vb f\Vb \bar{g}),
\end{equation*}
where $f=\Qb^q u\in\Fc$ and $g=\Qb^q v\in\Fc$.
\end{proposition}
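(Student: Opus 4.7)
The strategy is to transport the identity of Theorem \ref{LemmaBPR}, stated for the Landau subspace $\Lc_0$ in $L^2(\C,dA)$ and the operator $\Qb^+$, to the true poly-Fock space $\Fc$ in $L^2(\C,d\r)$ with its creation operator $\Sbb^+$, using the surjective isometry $\Vb: L^2(\C,d\r)\to L^2(\C,dA)$ and the intertwining relation $\Qb^+=\Vb\Sbb^+\Vb^{-1}$ from \eqref{CreAnniFock}. Since the proposed identity is sesquilinear in $(f,g)$, it suffices to establish it for $f,g$ running through the standard monomial basis of $\Fc$ and then extend by continuity using \eqref{bddnessFc1}.

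First I would apply Theorem \ref{LemmaBPR} directly to the Landau basis functions $\vf_k(z)=\s(z)z^k/\sqrt{k!}$, which are precisely $\Vb$ applied to the standard monomial basis $\psi_k:=z^k/\sqrt{k!}$ of $\Fc$. Iterating the intertwining relation gives $(\Qb^+)^q\vf_k = \Vb(\Sbb^+)^q\psi_k$ (and likewise for $\vf_l$), so the left-hand side of \eqref{BPR1} rewrites as
\begin{equation*}
    \bigl(F,\,\Vb(\Sbb^+)^q\psi_k\cdot\overline{\Vb(\Sbb^+)^q\psi_l}\bigr).
\end{equation*}
Since $\Vb$ is multiplication by the real positive function $\s$ and $|\s(z)|^2=\pi^{-1}e^{-|z|^2}$ is precisely the density $d\r/dA$, the two $\Vb$-factors are absorbed into the weight, converting the $L^2(\C,dA)$-pairing with $F$ into the $L^2(\C,d\r)$-pairing; the left-hand side therefore equals $\langle F(\Sbb^+)^q\psi_k,(\Sbb^+)^q\psi_l\rangle_{L^2(\C,d\r)}$. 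The right-hand side of \eqref{BPR1} is already in the required distributional form $(\Db_q F,\Vb\psi_k\cdot\overline{\Vb\psi_l})$, which is the same as $\langle \Vb^{-1}(\Db_q F)\Vb\psi_k,\psi_l\rangle_{\Fc}$.

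The identity just established is sesquilinear in its two basis labels and extends by linearity to all pairs $(f,g)$ in the dense linear span of the $\psi_k$. At this point hypothesis \eqref{bddnessFc1} enters: it asserts that the left-hand side, viewed as a sesquilinear form in $(f,g)$, is bounded in the norm of $\Fc$ on this dense subspace, so the identity forces the right-hand side to be bounded too, and both sides extend by $L^2(\C,d\r)$-continuity to all of $\Fc\times\Fc$. The only non-mechanical step in the plan is the bookkeeping that matches $|\s|^2$ with $d\r/dA$ so that the two $\Vb$-factors cancel cleanly; this is essentially built into the definition of $\Vb$ as multiplication by $\s$ and presents no substantive obstacle. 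The analytic heart of the result is Theorem \ref{LemmaBPR} itself, and once that is granted the proposition reduces to unitary transport followed by a density argument.
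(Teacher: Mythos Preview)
Your proposal is correct and follows essentially the same route as the paper: transport Theorem~\ref{LemmaBPR} from the Landau side to the Fock side via the isometry $\Vb$ and the intertwining relation $\Qb^+=\Vb\Sbb^+\Vb^{-1}$, verify the identity on the standard monomial basis, and then extend by sesquilinearity and the boundedness hypothesis~\eqref{bddnessFc1}. The paper presents this as a single chain of equalities preceding the proposition, but the logical content is identical to yours.
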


\section{Properties of Toeplitz operators in true poly-Bergman type spaces}
Now we use our transformation  results   to investigate some properties of  operators in true spaces of polyanalytic functions. Since the formulations and explanations are mostly the same for all three types of spaces we consider in this paper, we will refer to all of them as to true poly-Bergman type spaces (\emph{tpBt} spaces.)
\subsection{Compactness and degeneracy}
\subsubsection{Symbols with compact support and generalizations} We start with the most simple results. It was established in \cite{RV1,RV2,RV4} that a Toeplitz operator with symbol-distribution having compact support in $\C$, $\DD$, $\Pi$ is compact in the corresponding Bergman type space. Of course, if the symbol $F$ belongs to $\Es'(\O)$, $\O=\C,\DD$ or $\Pi$, the result of applying a differential operator to $F$ has compact support as well. This leads to the first, quite simple, result.
\begin{theorem}\label{ThCompSupp} If the symbol $F$ has compact support in $\O$ then the Toeplitz operator with symbol $F$ in the corresponding \emph{tpBt}-space is compact.
\end{theorem}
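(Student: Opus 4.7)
The plan is to invoke the transformation theorems of the previous section, namely Theorems \ref{ThEquPi}, \ref{Thcosim} and Proposition \ref{propTransFc}, which reduce the Toeplitz operator $\Tb_F$ acting in a true poly-Bergman type space to a Toeplitz operator in the corresponding standard Bergman or Fock space, but with the transformed symbol $\mathbf{L} F$, where $\mathbf{L}$ stands for one of the elliptic differential operators $\Kb$, $\Mb$ or $\Db_q$, depending on which of the three ambient spaces we are considering. These transformations are unitary equivalences in the Fock and half-plane cases and cosimilarities in the disk case; in particular, they all preserve the class of compact operators.

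The key observation is that if $F\in\Es'(\O)$ has compact support $K\Subset\O$ then $\mathbf{L}F$ is again a distribution supported in $K$. Indeed, each of $\Kb$, $\Mb$, $\Db_q$ is a finite-order differential operator whose coefficients are smooth functions of $z,\bar z$ (polynomials in $y$, or in $1-|z|^2$ and $z,\bar z$, or constants, respectively), so neither differentiation nor multiplication by these coefficients enlarges the support.

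It therefore suffices to verify that a Toeplitz operator with compactly supported distributional symbol is compact in each of the standard spaces $\Ac(\DD)$, $\Ac(\Pi)$, $\Fc$. This is exactly the result from \cite{RV1,RV2,RV4} recalled at the beginning of this subsection. Alternatively, it follows on the spot from the vanishing criteria \eqref{CompBerg}, \eqref{CompBergP} and the analogous statement in Theorem \ref{ThCompFock}: if $\mathbf{L}F$ is supported in the compact set $K\Subset\O$, then it vanishes on $\{z\in\DD:|z|>1-\ve\}$, on $\{z\in\Pi:\im z<\ve\}$, or outside $\{|z|<R\}$, respectively, for all sufficiently small $\ve$ (respectively, sufficiently large $R$), so the relevant supremum is identically zero.

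I do not anticipate a serious obstacle: the proof is essentially a two-line application of the reduction theorems combined with the already-cited compactness results. The only minor point worth flagging is the convention that \emph{compact support in $\O$} is to be read as the support being a compact subset of the open domain (hence bounded away from $\partial\O$ in the disk/half-plane cases); this is the natural interpretation of $F\in\Es'(\O)$ and is what makes the vanishing conditions trivially satisfied.
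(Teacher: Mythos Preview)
Your proposal is correct and matches the paper's own reasoning: the paper does not give a formal proof but simply observes, in the sentence immediately preceding the theorem, that applying the relevant differential operator to a compactly supported $F$ yields a compactly supported distribution, and then invokes the results of \cite{RV1,RV2,RV4} on compactness of Toeplitz operators with compactly supported symbols in the standard spaces. Your write-up is exactly this argument, only spelled out more fully; the alternative route you mention via the vanishing criteria \eqref{CompBerg}, \eqref{CompBergP}, and Theorem~\ref{ThCompFock} is a slight overreach as stated (those criteria are formulated for measures rather than arbitrary distributions), but it is superfluous since the cited results already cover the general distributional case.
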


Now, as it will happen repeatedly further on, it is quite possible that the symbol $F$, probably, does not have compact support, but, after applying to $F$ a differential operator $\Eb$ (this is $\Kb,\Mb,\Db$, depending on the space), we obtain the symbol $\Eb F$ with compact support. Thus, we have the following theorem.

\begin{theorem}\label{AdvThComSup} Suppose that for  a symbol $F$ the symbol  $\Eb F$ has compact support. Then the Toeplitz operator with symbol $F$ in the corresponding \emph{tpBt} space is compact.
\end{theorem}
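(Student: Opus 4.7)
The plan is to reduce the statement directly to the standard-space compactness result via the transformation machinery of Section \ref{secTransform}. The underlying observation is that a Toeplitz operator with symbol $F$ in a tpBt space is, up to a numerical factor, unitarily equivalent (or, for the disk, cosimilar) to a Toeplitz operator with symbol $\Eb F$ in the corresponding standard Bergman type space, where $\Eb$ stands for $\Kb$ on $\P$, $\Mb$ on $\DD$, or $\Db_q$ on $\C$.

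First, I would invoke the appropriate transformation result: Theorem \ref{ThEquPi} in the half-plane case, Theorem \ref{Thcosim} in the disk case, or Proposition \ref{propTransFc} in the Fock case. Each asserts that $\Tb_F$ in the tpBt space and $\Tb_{\Eb F}$ in the standard space share their basic operator-theoretic properties (boundedness, compactness, Schatten class membership, finite rank, vanishing), provided one of the two operators is known to be bounded. The first technical point is therefore to exhibit that boundedness, and this is immediate: since by hypothesis $\Eb F$ has compact support in $\O$, the results of \cite{RV1,RV2,RV4} that underlie Theorem \ref{ThCompSupp} directly yield that $\Tb_{\Eb F}$ is not just bounded but in fact compact in the corresponding standard space $\Ac(\O)$ or $\Fc$.

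Finally, I would transport this compactness back to the tpBt space. Unitary equivalence preserves compactness automatically; cosimilarity preserves it as well, since conjugation by the bounded invertible operator $\Si_j^{-1}$ keeps the ideal of compact operators invariant, and an analogous statement holds in the Fock setting via the isometry $\Vb$. Both preservation clauses are already recorded inside Theorems \ref{ThEquPi} and \ref{Thcosim}. The argument is structurally very short, and I anticipate no real obstacle; the only mild subtlety is the apparent circularity between the boundedness hypothesis of the transformation theorems and the compactness conclusion we seek, which is broken cleanly because compactness of $\Tb_{\Eb F}$ is established \emph{before} any transformation is invoked, directly from the compact support of $\Eb F$.
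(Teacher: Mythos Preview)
Your proposal is correct and matches the paper's approach: the paper does not give a formal proof, merely noting in the sentence preceding the theorem that if $\Eb F$ has compact support then the transformation results of Section~\ref{secTransform} combined with the standard-space compactness result (Theorem~\ref{ThCompSupp}) yield the conclusion. Your write-up just makes this explicit.
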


\begin{remark}\label{RemCompSup} As explained in \cite[Section 5]{RV2}, a Toeplitz operator with a compactly supported distributional symbol in $\Ac(\DD)$ has eigenvalues (and $s$-numbers, if the operator is not self-adjoint) tending to zero at least exponentially. The same holds for operators in the space $\Ac(\Pi)$; for the Fock space, the eigenvalues ($s$-numbers) decay superexponentially. By our transformation theorems, the same takes place for Toeplitz operators with compactly supported symbol, or even symbol with compactly supported $\Eb F$, in the corresponding \emph{tpBt} space. We are going to explore this topic in later publications.
\end{remark}

\subsubsection{Uniqueness}
It is well known that a Toeplitz operator in Bergman type spaces with more or less non-crazy symbol cannot be zero unless the symbol is zero. For example, if a radial symbol in the Fock space grows at infinity not faster than $\exp{a|z^2|}$, $a<1$, such uniqueness takes  place (examples of non-uniqueness, therefore, involve symbols growing at infinity \emph{very} rapidly, see \cite{GV}). The same is correct for operators in Bergman spaces on the disk and on the half-plane. To break uniqueness, the symbol should grow rapidly and oscillate very  fast near the boundary.
For operators in \emph{tpBt} spaces the situation is completely different.

Suppose that $F$ is a symbol in $\O$ satisfying in $\O$ the equation $\Eb F=0$ in the sense of distributions, where, as before  $\Eb$ is $\Kb,\Mb$ or $\Db$, depending on the space in question. Then, by the results of Section \ref{SF}, the Toeplitz operator with symbol $F$ in the corresponding \emph{tpBt} space, will be the zero operator. We delay to another publication the detailed analysis of the properties of such equations. We note a simple, however not quite trivial, special case.

\begin{theorem}Let $F$ be a bounded function (of course, this restriction can be relaxed) with compact support in $\O$. Suppose that the Toeplitz operator in a tpBt space with symbol $F$ is zero. Then $F$ is zero.
\end{theorem}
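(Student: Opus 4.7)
My approach is to reduce the assertion to a uniqueness property for compactly supported distributional symbols in the \emph{standard} analytic Bergman or Fock space and then to exploit the ellipticity of the transformation operator. First I would apply the transformation theorem of Section~\ref{secTransform} appropriate to the ambient space---Theorem~\ref{ThEquPi} for $\P$, Theorem~\ref{Thcosim} for $\DD$, or Proposition~\ref{propTransFc} for $\Fc$---to translate the hypothesis $\Tb_F=0$ on the tpBt space into the vanishing of the Toeplitz operator with symbol $\Eb F$ on the corresponding standard analytic space. Here $\Eb$ denotes $\Kb$, $\Mb$, or $\Db_q$; in the Fock case one also picks up an auxiliary factor of the smooth positive weight $\o$, which will be harmless below. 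Since $F\in L^\infty(\O)$ has compact support in $\O$, the distribution $\Eb F$ has compact support in $\O$ as well and is of order at most $\deg\Eb$.

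Next I would show that $\Eb F=0$ as a distribution on $\O$. The vanishing of the transformed Toeplitz operator gives $(\Eb F,u\bar v)=0$ (with the appropriate weight in the Fock case) for all $u,v$ in the standard analytic space. Taking $u=z^k,v=z^l$, which lie in $\Ac(\DD)$ and in $\Fc$, immediately yields the vanishing of all moments $(\Eb F,z^k\bar z^l)$ in those two cases; in the $\P$ case, where monomials do not belong to $\Ac(\P)$, I would first conjugate by the standard M\"obius isometry $W:\Ac(\DD)\to\Ac(\P)$---legitimate now that we are dealing with the analytic Bergman space, not with its polyanalytic cousins---to transport the problem to the disk and obtain the same moment conclusion. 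Extended by zero to $\R^2$, the compactly supported distribution $\Eb F$ has an entire Fourier--Laplace transform by Paley--Wiener; the vanishing of all moments is equivalent to the vanishing of all of its Taylor coefficients at the origin, and hence this entire function vanishes identically, so $\Eb F=0$.

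Finally I would conclude by elliptic regularity. The operator $\Eb$ has real-analytic coefficients, and its only top-order contribution comes from the pure monomial $[\D(y^2\cdot)]^j$ for $\P$, $[\D((1-|z|^2)^2\cdot)]^j$ for $\DD$, or a leading power of $\Delta$ for $\Fc$, with principal symbols $y^{2j}|\xi|^{2j}$, $(1-|z|^2)^{2j}|\xi|^{2j}$, and a positive multiple of $|\xi|^{2q}$ respectively---all nonvanishing throughout the interior of $\O$. Hence the bounded distributional solution $F$ of $\Eb F=0$ is real-analytic on $\O$ by analytic hypoellipticity, and since $\supp F$ is a compact subset of $\O$ the function $F$ vanishes on the nonempty open set $\O\setminus\supp F$; real-analyticity on the connected open set $\O$ then forces $F\equiv 0$.

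The step I expect to be most delicate is the half-plane case, where $\Kb$ degenerates on $\partial\P$: one must verify interior ellipticity and keep the support of $\Eb F$ uniformly away from the boundary, both of which are automatic from the hypothesis that $\supp F$ is compact in $\P$. The M\"obius reduction in the second step conveniently sidesteps the fact that the natural $\Ac(\P)$-basis is not made of polynomials, and the auxiliary weight $\o$ in the Fock case is harmless since it is smooth and nonvanishing.
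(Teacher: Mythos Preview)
Your proposal is correct and follows the same three-step outline as the paper's sketch: reduce via the transformation theorems to $\Tb_{\Eb F}=0$ on the standard analytic space, conclude $\Eb F=0$, and then use analytic hypoellipticity of $\Eb$ to deduce that the compactly supported $F$ vanishes. The only substantive addition is that you actually justify the middle step (via moments and Paley--Wiener, with a M\"obius reduction for $\Pi$), whereas the paper simply asserts ``therefore $\Eb F=0$'' by appeal to the known uniqueness of compactly supported symbols in the classical spaces.
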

Not giving a formal proof, we just explain that the operator with symbol $\Eb F$ in the usual Bergman space is zero, therefore, $\Eb f=0$. A solution of this equation is a real-analytic function and therefore cannot have compact support unless it is zero everywhere.

\subsubsection{The finite rank property} Recently there has been certain research activity concerning the problem of Toeplitz operators in Bergman type spaces having finite rank. Some final results on this topic have been obtained in \cite{AR}, \cite{RS} (see also the bibliography there.)
Due to these results, a Toeplitz operator in $\Ac(\DD)$ and $\Fc$ with distributional symbol $G$, subject to some restrictions concerning 'growth at infinity', may have finite rank if only if the symbol $G$ is, in fact, a finite sum of point masses and their derivatives. In particular, this characterization holds for symbols with compact support.  More generally, if a distribution $G$ satisfies these growth conditions then any derivative of $G$ satisfies them (see \cite{RS} for details). From these results, it follows, that no function with a moderate growth (and, for sure, no function  with compact support) may be the symbol of a finite rank Toeplitz operator in a Bergman type space.

Now we touch upon this problem for operators in a \emph{tpBt} spaces. Suppose that $F$ is a function satisfying the conditions mentioned above. Suppose that the Toeplitz operator in the \emph{tpBt} space has finite rank. Then, by our transformation results, the Toeplitz operator in the corresponding usual Bergman type space with \emph{distributional} symbol $G = \Eb F$  has finite rank as well, therefore, $G$ must  be a finite linear combination of point masses and their derivatives. Thus any solution $F$ of the elliptic equation  $\Eb F=G$ produces a Toeplitz operator with finite rank. In particular, if the order $j+1$ of the \emph{tpBt} space is larger than the singularity order of the distribution $G$ then, by the elliptic regularity, any solution of the equation $\Eb F=G$ is a continuous function. Thus, in   \emph{tpBt} spaces there can exist symbols-functions producing nontrivial finite rank Toeplitz operators. Note, however, that if we suppose that $F=0$ on some open set in $\O$ (in particular, if it has compact support in $\O$), then the function $F$, being a solution of a real-analytic elliptic equation $\Eb F=G$, must be real-analytic everywhere outside the singularities of $G$, and this means, outside some finite set. Therefore, $F$ must be zero outside this set, where $F$ must be a finite linear combination of point masses and their derivatives.

\subsection{Boundedness}
Here we combine the results on transformation of Section \ref{secTransform} and the boundedness conditions described in Section \ref{Sect:ToeplitzDistr}.
We have everything at hand already.
\begin{theorem}Let $F$ be a function $\DD$ such that
\begin{equation}\label{BddOpDisk}
\sup_{z\in\DD}\bigg\{(1 - |z|)^{-2} \int_{B(z, \frac12(1-|z|))}|F(w)|dA(w)\bigg\}<\infty
\end{equation}
then the Toeplitz operator with symbol $F$ in $\Ac_{(k+1)}$ is bounded. Under the additional condition
\begin{equation}\label{CompOpDisk}
   \lim_{\ve\to0} \sup_{|z|>1-\ve}\bigg\{(1 - |z|)^{-2} \int_{B(z, \frac12(1-|w|))}|F(w)|dA(w)\bigg\}=0
\end{equation}
this operator is compact.
\end{theorem}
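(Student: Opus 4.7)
The plan is to combine the cosimilarity reduction of Theorem~\ref{Thcosim} with the distributional Bergman estimates of Theorems~\ref{EstBergD}--\ref{OperCompBerg}, so that the only remaining task is to verify that the distribution $\Mb F$ fits the Carleson-measure-derivative framework of Section~\ref{Sect:ToeplitzDistr}. By Theorem~\ref{Thcosim} the operator $\Tb_F(\Ac_{(k+1)}(\DD))$ is cosimilar, via the Banach isomorphism $\Si_k$ of~\eqref{SDD}, to the operator $\Tb_{\Mb F}(\Ac(\DD))$, where $\Mb$ is the elliptic differential operator of Proposition~\ref{propTransDisk}. Since boundedness and compactness are invariant under cosimilarity, it suffices to establish both properties for $\Tb_{\Mb F}$ on the ordinary Bergman space.

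Next, I would exploit the weight structure of $\Mb$: each monomial has weight $0$ when differentiation is assigned weight $-1$ and every factor $(1-|z|^2)$ weight $+1$. Expanding $\Mb F$ and commuting all differentiations outward---an operation that preserves the weight, since $[\partial,(1-|z|^2)]=-\bar z$ and $[\bar\partial,(1-|z|^2)]=-z$ are both of weight $0$---yields a finite decomposition
\begin{equation*}
  \Mb F \;=\; \sum_{\a,\b}\, \partial^{\a}\bar{\partial}^{\b}\n_{\a,\b}, \qquad \n_{\a,\b} \;=\; (1-|w|^2)^{\a+\b}\, P_{\a,\b}(w,\bar w)\, F(w)\, dA(w),
\end{equation*}
with bounded polynomial factors $P_{\a,\b}$. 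On $B(z,\tfrac12(1-|z|))$ the weight $(1-|w|^2)$ is comparable to $(1-|z|)$, so hypothesis~\eqref{BddOpDisk} immediately gives
\begin{equation*}
  |\n_{\a,\b}|\bigl(B(z,\tfrac12(1-|z|))\bigr) \;\le\; C_{\a,\b}\,(1-|z|)^{\a+\b+2},
\end{equation*}
which is exactly condition~\eqref{CondD} of Theorem~\ref{EstBergD} with $k'=\tfrac{\a+\b}{2}$ (possibly a half-integer, which Theorem~\ref{EstBergD} permits). Applying Theorems~\ref{EstBergD} and~\ref{OperBddBerg} to each summand and summing with the triangle inequality finishes the boundedness part.

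For the compactness statement the same argument runs verbatim with~\eqref{CompOpDisk} replacing~\eqref{BddOpDisk} and Theorem~\ref{OperCompBerg} replacing Theorem~\ref{EstBergD}: the vanishing of the Carleson constant at the boundary transfers to each $\n_{\a,\b}$ with the same matching of exponents, which is forced by the weight-$0$ property of $\Mb$. The main obstacle is the bookkeeping of the decomposition, namely bringing the composition of the noncommuting building blocks $\Delta(1-|z|^2)^2\cdot$, $\partial z(1-|z|^2)\cdot$ and $\bar z\bar\partial(1-|z|^2)\cdot$ of $\Mb$ into the clean form $\partial^{\a}\bar\partial^{\b}[(1-|w|^2)^{\a+\b}P_{\a,\b}F]$. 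This is mechanical distributional integration by parts, and once it is carried out, the matching of the Carleson exponent $2(k'+1)$ with the weight $\a+\b+2$ of $\n_{\a,\b}$ is automatic.
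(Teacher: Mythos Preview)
Your proposal is correct and follows essentially the same route as the paper: reduce via the cosimilarity of Theorem~\ref{Thcosim} to $\Tb_{\Mb F}$ on $\Ac(\DD)$, use the weight-$0$ structure of $\Mb$ from Proposition~\ref{propTransDisk} to decompose $\Mb F$ into terms of the form $\partial^{\a}\bar\partial^{\b}$ applied to $(1-|z|^2)^{\a+\b}$ times a bounded factor times $F$, and then invoke Theorems~\ref{EstBergD} and~\ref{OperCompBerg} termwise, the boundary-distance exponents cancelling precisely because of the weight-$0$ property. The paper's own proof is the same argument, phrased slightly differently (it writes the terms of $\Mb$ as $h_\a(z)\ell_\a(\partial,\bar\partial)(1-|z|^2)^{|\a|}$ with the bounded smooth factor placed outside the differentiation rather than inside the measure, but this is an inessential rearrangement).
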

Thus, strangely enough, the boundedness conditions for Toeplitz operators in $\Ac_{(j+1)}(\DD)$ are the same as in $\Ac(\DD)$.
 \begin{proof} By Theorem \ref{Thcosim}, it is sufficient to prove that the Toeplitz operator in the Bergman space $\Ac(\DD)$ with distributional symbol $\Mb_j F$ is bounded. We look closer at the structure of the operator $\Mb_j$. By Proposition \ref{propTransDisk}, it consists of the sum of terms of weight zero.  These terms can be transformed to  the ones $h_\a(z)\ell_\a(\partial, \bar{\partial})(1-|z|^2)^{|\a|}$, with $|\a|\le 2j$. Here, $\ell_\a(\partial, \bar{\partial})$ is a constant coefficients differential operator of order $|\a|$ and $h_{\a}$ is a smooth bounded function on $\DD$. Each such term generates a bounded Toeplitz operator in $\Ac(\DD)$, by Theorem \ref{EstBergD}, applied to the measure $\n_\a$, $\n_a(E)=\int_E(1-|w|^2)^{|\a|}|F(w)|dA(w)$ (the factors depending on  the distance to the boundary cancel). The statement about compactness follows in the same way from Theorem \ref{OperCompBerg}.
 \end{proof}

The boundedness and compactness theorem for Toeplitz operators in $\Ac_{(j+1)}(\Pi)$ follows completely analogously from Theorem \ref{ThEquPi}, Proposition \ref{PropTranshalfplane} and Theorem \ref{EstPi}.

\begin{theorem}\label{ThBddPpolyPi}
Let $F$ be a function on $\Pi$ If
\begin{equation}\label{BddOPerPi}
    \sup_{z\in\Pi} \bigg\{ (\im{z})^{-2}\int_{B(z, \frac12\im{z})}|F(w)|dA(w)\bigg\}<\infty
\end{equation}
then the Toeplitz operator in $\Ac_{(j)}$ with symbol $F$ is bounded. If, additionally,
 \begin{equation}\label{CompOpDDD}
   \lim_{\ve\to0} \sup_{\im{z}<\ve}\bigg\{ (\im{z})^{-2} \int_{B(z, \frac12\im{z})}|F(w)|dA(w)\bigg\}=0
\end{equation}
then this operator is compact.
\end{theorem}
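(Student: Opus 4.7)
The plan is to mirror the disk-case argument just given, replacing Theorem \ref{Thcosim} and Theorem \ref{EstBergD} by their half-plane counterparts, Theorem \ref{ThEquPi} and Theorem \ref{EstPi}. By Theorem \ref{ThEquPi} it is enough to prove that the Toeplitz operator $\Tb_{\Kb F}(\Ac(\Pi))$ in the standard Bergman space on the half-plane is bounded (resp.\ compact), with $\Kb$ the differential operator supplied by Proposition \ref{PropTranshalfplane}.

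First I would unpack $\Kb F$. By Proposition \ref{PropTranshalfplane}, $\underline{\Kb}$ is a polynomial of degree $j$ in the three weight-zero building blocks $\D(y^2\cdot)$, $\bar{\partial}(y\cdot)$, $\partial(y\cdot)$. Expanding all products, pushing derivatives outward by repeated use of Leibniz and collecting smooth bounded coefficients, one obtains a finite representation
\begin{equation*}
  \Kb F = \sum_\gamma \partial^{\alpha_\gamma}\bar{\partial}^{\beta_\gamma}\!\bigl(h_\gamma(z)\, y^{2k_\gamma}\, F\bigr),
\end{equation*}
with $\alpha_\gamma + \beta_\gamma = 2 k_\gamma$ (this is exactly the weight-zero statement), $k_\gamma \le j$, and each $h_\gamma$ smooth and bounded on $\Pi$.

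For each term I would invoke Theorem \ref{EstPi} with the signed measure
\begin{equation*}
  d\nu_\gamma(w) = h_\gamma(w)\,(\im w)^{2k_\gamma}\, F(w)\, dA(w), \qquad k = k_\gamma.
\end{equation*}
On any ball $B(z,\tfrac12 \im z)$ the factor $(\im w)^{2k_\gamma}$ is comparable to $(\im z)^{2k_\gamma}$, hence the Carleson quantity $C_{k_\gamma}(\nu_\gamma,\Pi)$ in \eqref{CondBergPi} is dominated, up to a universal constant, by the supremum in \eqref{BddOPerPi}. Summing over $\gamma$ and applying Theorem \ref{OperBddBerg} yields boundedness of $\Tb_{\Kb F}(\Ac(\Pi))$, and hence, through Theorem \ref{ThEquPi}, of $\Tb_F(\Ac_{(j)}(\Pi))$. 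The compactness assertion follows along the same route with Theorem \ref{OperCompBerg}b in place of Theorem \ref{EstPi}: the identical ball comparison reduces the vanishing of $C_{k_\gamma}(\nu_\gamma,\Pi)$ as $\im z \to 0$ to assumption \eqref{CompOpDDD}.

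The principal (though purely bookkeeping) obstacle is the normal-ordering expansion producing the displayed form, namely the preservation of the relation $\alpha_\gamma + \beta_\gamma = 2 k_\gamma$ after all Leibniz manipulations. This is guaranteed by the weight-invariance property built into Proposition \ref{PropTranshalfplane}, so no computation is required beyond careful tracking of how derivatives and powers of $y$ combine under the Leibniz rule.
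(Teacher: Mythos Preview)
Your proposal is correct and follows essentially the same route as the paper: the paper itself gives no separate proof for the half-plane case, stating only that it ``follows completely analogously from Theorem \ref{ThEquPi}, Proposition \ref{PropTranshalfplane} and Theorem \ref{EstPi},'' i.e., exactly the disk argument with $(1-|z|^2)$ replaced by $y=\im z$. Your weight-zero bookkeeping, the resulting decomposition into terms $\partial^{\alpha_\gamma}\bar\partial^{\beta_\gamma}(h_\gamma y^{2k_\gamma}F)$, and the cancellation of the $y$-powers against the $(\im z)^{-2(k_\gamma+1)}$ factor in \eqref{CondBergPi} reproduce precisely what the paper does in the disk case and intends here.
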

Finally, we present the result for true poly-Fock spaces. It follows in the same way from Proposition \ref{propTransFc} and Theorem \ref{ThEstFock}.  Here, unlike two previous cases, the boundedness and compactness conditions are different for poly-Fock spaces of different order. This is explained by the absence of cancelation of weight factors in the transformed symbol with the weight factors in the boundedness conditions.
\begin{theorem}Suppose that the function $F$ satisfies
\begin{equation*}
    \sup_{z\in\C}\bigg\{(1+|z|^2)^{(j-1)} \int_{B(z,r)}|F(w)|dA(w)\bigg\}<\infty.
\end{equation*}
Then the Toeplitz operator in $\Fc_{(j)}$ with symbol $F$ is bounded in $\Fc_{(j)}$. If, additionally
\begin{equation*}
   \lim_{R\to\infty} \sup_{|z|>R}\bigg\{(1+|z|^2)^{(j-1)} \int _{B(z,r)}|F(w)|dA(w)\bigg\}=0,
\end{equation*}
then this operator is compact.
\end{theorem}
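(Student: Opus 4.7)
The plan is to reduce the problem for the true poly-Fock space $\Fc_{(j)}$ to the standard Fock space $\Fc$ via Proposition \ref{propTransFc}, and then invoke the previously established boundedness/compactness criteria for Toeplitz operators on $\Fc$ with distributional symbols, Theorems \ref{ThEstFock} and \ref{ThCompFock}. This mirrors the scheme already used for the disk and half-plane cases, with the key difference being that there are no boundary-weight factors to cancel, which is exactly why the condition here carries the factor $(1+|z|^2)^{j-1}$ rather than reducing to a zero-order condition.

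Setting $q = j-1$, Proposition \ref{propTransFc} identifies the Toeplitz operator $\Tb_F$ on $\Fc_{(j)}$ with the Toeplitz operator on $\Fc$ having distributional symbol $\Db_{j-1} F$, up to a constant factor of unitary equivalence. By Theorem \ref{LemmaBPR}, $\Db_{j-1}$ is a polynomial of degree $j-1$ in the Laplacian, so using $\Delta = 4\partial\bar{\partial}$ one writes
\[
\Db_{j-1} F = \sum_{m=0}^{j-1} 4^{m} c_{m}\, \partial^{m}\bar{\partial}^{m} F,
\]
with each summand interpreted distributionally. Each term $\partial^{m}\bar{\partial}^{m} F$ is of precisely the shape $\partial^{\alpha}\bar{\partial}^{\beta}\nu$ with $\alpha=\beta=m$, $2k=2m$, and $d\nu(w) = F(w)\,dA(w)$, to which Theorems \ref{ThEstFock} and \ref{ThCompFock} apply directly.

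For boundedness, I would verify the Fock--Carleson hypothesis \eqref{condFock} for $\nu$ at the level $k=m$, namely $\sup_{z\in\C}(1+|z|^{2})^{m}\int_{B(z,r)}|F(w)|\,dA(w) < \infty$. The assumption of the theorem is precisely this inequality for the top value $m = j-1$; the pointwise bound $(1+|z|^{2})^{m}\le (1+|z|^{2})^{j-1}$ then forces the same finiteness for every $m<j-1$. Summing the bounded sesquilinear forms produced by Theorem \ref{ThEstFock} for each $m$ yields boundedness of $\Tb_{\Db_{j-1} F}$ on $\Fc$, and hence of $\Tb_F$ on $\Fc_{(j)}$ by Step~1. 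The compactness statement follows identically by replacing Theorem \ref{ThEstFock} with Theorem \ref{ThCompFock}: the vanishing condition at infinity for the top level $m=j-1$ implies the analogous condition for all $m\le j-1$, each summand produces a compact Toeplitz operator on $\Fc$, and compactness is preserved under the unitary equivalence.

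The only place requiring a moment's thought is the bookkeeping around the expansion of $\Db_{j-1}$ into a sum of $\partial^{m}\bar{\partial}^{m}$-type symbols and the confirmation that the weights line up so that the single hypothesis at level $j-1$ controls \emph{all} lower-order terms; this is what sets apart the Fock case from the Bergman cases, where the weights $(1-|z|^{2})^{|\alpha|}$ or $(\im z)^{|\alpha|}$ appearing in $\Mb$ and $\Kb$ cancel against the boundary-distance factors in \eqref{CondD} and \eqref{CondBergPi}. Here no such cancellation is available, so the worst-case exponent $j-1$ survives into the final hypothesis. No further technical obstacle is expected.
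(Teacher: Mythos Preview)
Your overall strategy---reduce to $\Fc$ via Proposition \ref{propTransFc}, then expand $\Db_{j-1}F$ and apply Theorem \ref{ThEstFock} term by term---matches the paper's. However, there is a genuine gap at the point where you claim that each term $\partial^{m}\bar{\partial}^{m}F$ is ``of precisely the shape $\partial^{\alpha}\bar{\partial}^{\beta}\nu$'' to which Theorem \ref{ThEstFock} applies. It is not: look again at \eqref{DefFock}. The Fock-space sesquilinear form $\fb_{\alpha,\beta,\nu}$ is built from \emph{coderivatives} $\pp=\partial-\tfrac12\bar{z}$ of the symbol, not ordinary derivatives. Concretely, the form you inherit from Proposition \ref{propTransFc} is $(\partial^{m}\bar{\partial}^{m}F,\,\omega f\bar{g})$, whereas $\fb_{m,m,F\,dA}[f,g]=(\omega F,\,\partial^{m}f\cdot\bar{\partial}^{m}\bar{g})$; the presence of the Gaussian weight $\omega$ makes these genuinely different, and Theorem \ref{ThEstFock} bounds only the latter.

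The fix is exactly what the paper does: invoke Proposition \ref{PropCoder} to rewrite each $\partial^{m}\bar{\partial}^{m}$ as a polynomial in $\pp,\bar{\pp}$ with coefficients that are monomials in $z,\bar{z}$. After this rewriting, every resulting term is of the form $\pp^{\alpha}\bar{\pp}^{\beta}$ applied to a measure $d\nu_{\gamma}(w)=p_{\gamma}(w,\bar{w})F(w)\,dA(w)$ for some polynomial $p_{\gamma}$, and now Theorem \ref{ThEstFock} does apply. You then have to check \eqref{condFock} for these measures: on $B(z,r)$ the polynomial factor contributes a power of $(1+|z|^{2})$ that, combined with the exponent coming from the coderivative order, is always $\le j-1$, so the single hypothesis at level $j-1$ still dominates. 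Your intuition about why the exponent $j-1$ survives is correct; you just skipped the derivative-to-coderivative conversion that makes Theorem \ref{ThEstFock} legitimately applicable.
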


\begin{proof} As it is shown in Proposition \ref{PropCoder}, the derivatives entering into the operator $\Db$ can be expressed in a linear way through the coderivatives. To each of the corresponding terms in $\Db F$, we can, therefore, apply Theorem \ref{ThEstFock}, which gives the required estimate. The compactness follows automatically.
\end{proof}

\end{document}